\newtheorem{thm}{Theorem}[section]
\newtheorem{cor}[thm]{Corollary}
\newtheorem{prop}[thm]{Proposition}
\newtheorem{defin}[thm]{Definition}
\newtheorem{lema}[thm]{Lemma}
\newtheorem{rmk}[thm]{Remark}
\newtheorem{ex}[thm]{Example}
\newcommand{\codim}{\operatorname{codim}}
\def\N{\mathbb{N}}
\def\C{\mathbb{C}}
\def\P{\mathbb{P}}
\def\K{\mathbb{K}}
\def\L{\mathbb{L}}
\def\rk{\operatorname{rk}}
\def\Hess{\operatorname{Hess}}
\def\hess{\operatorname{hess}}
\newcommand{\Hilb}{\operatorname{Hilb}}
\newcommand{\Ann}{\operatorname{Ann}}
\newcommand{\rank}{\operatorname{rank}}
\newcommand{\wrk}{\operatorname{wrk}}
\newcommand{\ba}{\mathcal{B}}
\begin{document}

\title{Waring problems and the Lefschetz properties}

\author[T. Dias]{Thiago Dias}

\author[R. Gondim]{Rodrigo Gondim}

\address{Universidade Federal Rural de Pernambuco, av. Don Manoel de Medeiros s/n, Dois Irmãos - Recife - PE
52171-900, Brasil}
\email{rodrigo.gondim@ufrpe.br}
\email{thiago.diasoliveira@ufrpe.br} 

\begin{abstract} We study three variations of the Waring problem for polynomials, concerning the Waring rank, the border rank and the cactus rank of a form and we show how the Lefschetz properties of the associated algebra affect them. The main tool is the theory of mixed Hessian matrix. We construct new families of wild forms, that is, forms whose cactus rank, of schematic nature, is bigger then the border rank, defined geometrically. 
\end{abstract}

\maketitle

\section*{Introduction}

The Waring problem, in number theory, asks for each exponent $k$, the minimum $s$ such that every positive integer can be decomposed as a sum of at least 
$s$ perfect $k$-th powers. In analogy, the algebraic Waring problem asks what is the minimum $s$ such that any homogeneous polynomial $f\in \K[x_0,\ldots,x_n]_d$, of degree $d$, can be decomposed as a sum of at least $s$ $d$-th powers of linear forms.  \\

The Waring problem for polynomials is a classical subject in Commutative Algebra and Algebraic Geometry and it has lots of variants. One of them is the following: for a given form $f$ of degree $d$, to find the minimal number $s$, such that $f$ can be decomposed as a sum of $s$ powers of linear forms. It goes back to Sylvester, that solves the problem for binary forms in  \cite{Syl,Syl2} (see also \cite{CS}). An explicit decomposition for a given polynomial is hard to find. For monomials there is a decomposition given in \cite{BBT,ECG}, but this decomposition sometimes is not be minimal one. The Waring problem was solved for generic forms by Alexander and Hirschowitz in \cite{AH1,AH2,AH3}. There are several applications of Waring problems in computational and applied Mathematics (see \cite{BCMT,CGLM}). \\

In our context, we are interested in three variants of the Waring problem. We work over the complex numbers.
Let $f \in R=\C[x_0, \ldots,x_n]$ be a degree $d$ form. We consider these notions of rank for $f$:
\begin{enumerate}
 \item[(i)] The Waring rank of $f$ is its algebraic rank: it is the minimum $s=wrk(f)$ such that $f$ can be decomposed as a sum of $d$-th powers of $s$ linear forms. 
 
 \item[(ii)] The Border rank of $f$ is its geometric rank: it is the minimum $s=\underline{rk}(f)$ such that the class of $f$ in $\P(R_d)$, where $R_d=\C[x_0,\ldots,x_n]_d$, belongs to the $s$-th secant variety of the Veronese image $\mathcal{V}_d(\P^n)\subset \P(R_d)$. It is equivalent to say that there is a one parameter family of forms $f_t$ of Waring rank $s$ such that $f=\displaystyle \lim_{t \to 0} f_t$. 
 
 \item[(iii)] The Cactus rank of $f$ is its schematic rank: it is the minimum $s=cr(f)$ such that there is a finite scheme $K$  of length $s$, $K \subset\mathcal{V}_d(\P^n))\subset \P(R_d)$ such that $[f] \in <K>$.  
\end{enumerate} 

It follows that $\underline{rk}(f)\leq wrk(f)$ and $cr(f)\leq wrk(f)$, while in general $cr(f)$ and $\underline{rk}(f)$ are incomparable (see \cite{BBM}). We are interested in special forms for which these notions of rank do not coincide. 
For instance, very few examples are known satisfying $cr(f)>\underline{rk}(f)$, they are called wild forms (see \cite{BB,HMV}). The main goal of this work is to describe new classes of wild forms, and to show how they are deeply connected with the Lefschetz properties of an associated algebra. To be precise, we are not using the usual definition of wild form, but our condition implies the usual one (see Definition \ref{def:wild} and the later comments and also \cite{BB,BBM,HMV}). \\

The Strong Lefschetz property (SLP) is an algebraic abstraction introduced by Stanley in \cite{St} for standard graded Artinian algebras. It was inspired by the  so called hard Lefschetz Theorem on the cohomology of smooth projective complex varieties (see \cite{La} and \cite[Chapter 7]{Ru}). 
Let $A = \displaystyle \bigoplus_{k=0}^d A_k$ be a graded Artinian $\K$-algebra. We say that $A$ has the Strong Lefschetz property (SLP for short) if there exists a linear form $l \in A_1$ such that every multiplication map $\mu_{l^j}:A_k \to A_{k+j}$ has maximal rank. A weaker formulation is called Weak Lefschetz property (WLP). We say that $A$ has the WLP if there is a linear form $l \in A_1$ such that all the multiplication maps $\mu_{l}:A_k \to A_{k+1}$ have maximal rank (see \cite{HMMNWW}).\\

Of particular interest are Artinian algebras satisfying Poincaré duality, which can be characterized as standard graded Artinian Gorenstein algebras, AG algebras for short (see \cite{MW}). The choice of algebras satisfying Poincaré duality is natural in the context of the original Lefschetz result and also in several new contexts where the Lefschetz properties have been introduced over the years, in categories having a cohomology algebra. From the geometric perspective, Lefschetz properties were studied for Projective Varieties (see \cite{La,Ru}), Solvmanifolds (see \cite{Ka}), Arithmetic Hyperbolic manifolds (see \cite{Be}), subvarieties of Shimura varieties (see \cite{HL}). In Combinatorics, Lefschetz properties were introduced in the context of Simplicial complexes by Stanley in \cite{St,St2} and used in \cite{BN,GZ,KN} just to cite some. In Representation Theory the Lefschetz properties were posed for co-invariant rings of Coxeter groups \cite{NW}. Lefschetz properties are also related with the Sperner property (see \cite{HMMNWW,St}).

Focusing our attention in AG algebras, by Macaulay Matlis duality one knows that they are a quotient of a polynomial ring (described as ring of differential operators) by the annihilator of a single form. The main tools to understand the SLP and the WLP are the Higher Hessian matrix, introduced in \cite{MW}, that controls the SLP and the mixed Hessian matrix, introduced in \cite{GZ2}, that generalize the previous notion and control both WLP and SLP.
Our first result is a factorization of the Mixed Hessian matrix of a form in a power sum decomposition of a form, see Proposition \ref{prop:decompositionofhessian}. We use this decomposition to give a criterion of maximality of its rank (see Proposition \ref{thm:consequencesofWDWt}) and WLP (see Corollary \ref{cor:easy}). As a Corollary we obtain an inequality between the border rank and the Waring rank of certain forms (see Corollary \ref{cor:wrk_br}). In \cite{IK}, the authors used power sum decomposition to study AG algebras and {\it vice versa}. This idea have been used many times.

We study the border rank of a class of bi-graded forms that are closely related to the classical works of Gordan-Noether and Perazzo on forms with vanishing Hessian, for a detailed account on the subject see \cite{Go}. In Proposition \ref{prop:border} we give an upper bound for the border rank of these forms. \\

The main results of this work are Theorem \ref{thm:main_A} and Theorem \ref{thm:main_C} and their Corollaries, that produce new classes of wild forms (see \ref{cor1} and \ref{cor2}). In \cite{BB,HMV}, the authors studied wild forms of minimal border rank with vanishing Hessian. In \cite{HMV} they proved that every form with vanishing Hessian and minimal border rank is wild. We construct classes of wild forms whose border rank is not minimal and also classes whose Hessian is non vanishing. 
Since we get an upper bound for the border rank of a class of forms related with forms with degenerated mixed Hessian, our strategy was to find a lower bound for the cactus rank in the same philosophy of \cite{BB,HMV}. As it has been noticed before in \cite{BB,HMV}, in degree one, a natural ingredient to find a lower bound for the cactus rank, is to show that it is bigger than the Hilbert function on this degree, of the associated AG algebra. Generalizing this idea we look for an element in the saturation, in degree $k$, of the ideal generated by the graded parts of degree $k$ of the Macaulay dual of $f$. 
To get a lower bound to the Cactus rank we impose that the form is $k$-concise, meaning that the Hilbert function is maximal up to degree $k$.

\section{Preliminaries}
\subsection{Artinian Gorenstein algebras and Lefschetz properties}

Let $\K$ be a field of $\operatorname{char}(\K)=0$ and let $A = \displaystyle \bigoplus_{i=0}^d A_i$ be an Artinian $\K$-algebra with $A_d \neq 0$, we say that $A$ is standard graded if $A_0 =\K$ and $A$ is generated in degree $1$ as algebra. 
The Hilbert function of $A$ can be described by the vector  $\operatorname{Hilb}(A) = (a_0, a_1, \ldots, a_d)$, where $a_i=\dim A_i$. We say that $\operatorname{Hilb}(A)$ is unimodal if it has no valleys, that is, there exists $k$ such that $1\leq a_1\leq \ldots \leq a_k \geq a_{k+1}\geq a_d$.

\begin{defin}\rm
A standard graded algebra $A$ is Gorenstein if and only if $a_d = 1$ and the restriction of the multiplication of the algebra in complementary degree, that is, $A_i \times A_{d-1} \to A_d \simeq \K$ is a perfect paring for $i =0,1,\ldots,d$ (see \cite{MW}). 
\end{defin}

Macaulay-Matlis duality produces standard graded Artinian Gorenstein algebras. Let us recall this construction.
Let $f \in R = \K[x_0,x_1,\ldots,x_n]_d$ be a form of degree $\deg(f)=d \geq 1$ and let 
$Q=\K[X_0,X_1,\ldots,X_n]$ be the ring of differential operators associated to $R$. 
We define the annihilator ideal 
$$\operatorname{Ann}(f) = \{\alpha \in Q|\ \alpha(f)=0\}\subset Q.$$
The homogeneous ideal $\operatorname{Ann}(f)$ of $Q$ is also called Macaulay dual of $f$. We define $$A=\frac{Q}{\operatorname{Ann}(f)}.$$
 $A$ is a standard graded Artinian Gorenstein $\K$-algebra such that $A_j=0$ for $j>d$ and such that $A_d \neq 0$ (see \cite[Section 1,2]{MW}). 
We assume, without loss of generality, that $(\operatorname{Ann}(f))_1=0$. 

The Theory of Inverse Systems gives us the converse. A proof of this result can be found in \cite[Theorem 2.1]{MW}.

\begin{thm}{\bf \ (Double annihilator Theorem of Macaulay)} \label{G=ANNF} \\
Let $R = \K[x_0,x_1,\ldots,x_N]$ and let $Q = \K[X_0,X_1,\ldots, X_N]$ be the ring of differential operators. 
Let $A= \displaystyle \bigoplus_{i=0}^dA_i = Q/I$ be an Artinian standard graded $\mathbb K$-algebra. Then
$A$ is Gorenstein if and only if there exists $f\in R_d$
such that $A\simeq Q/\operatorname{Ann}(f)$.
\end{thm}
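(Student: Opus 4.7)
The plan is to prove both implications via the apolarity pairing between $R$ and $Q$. For each $k$ the contraction action of $Q_k$ on $R_k$ produces a perfect pairing $Q_k \times R_k \to \K$; on monomial bases $\{X^I\}$ and $\{x^I\}$ one has $X^I(x^J)\in\K^*\delta_{IJ}$, which is the concrete fact I would cite. More generally, for $\alpha\in Q_k$ and $f\in R_d$ with $k\le d$, one has $\alpha(f)\in R_{d-k}$, and perfectness of the pairing in degree $d-k$ yields the key tool used throughout: $\alpha(f)=0$ in $R_{d-k}$ if and only if $\beta(\alpha(f))=(\beta\alpha)(f)=0$ for every $\beta\in Q_{d-k}$.

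For the ``if'' direction, assume $A = Q/\Ann(f)$ with $f\in R_d$. First I would note that $A_j = 0$ for $j > d$ (since every $\alpha\in Q_j$ differentiates $f$ to zero), and that the surjection $Q_d\twoheadrightarrow\K$, $\alpha\mapsto\alpha(f)$, has kernel $\Ann(f)_d$, whence $A_d\simeq\K$. The main step is to show the multiplication pairing $A_i\times A_{d-i}\to A_d$ is perfect. Given a nonzero $\bar\alpha\in A_i$ one has $\alpha(f)\neq 0$ in $R_{d-i}$; applying the tool above in reverse, some $\beta\in Q_{d-i}$ satisfies $\beta(\alpha(f))=(\alpha\beta)(f)\neq 0$, so $\overline{\alpha\beta}\neq 0$ in $A_d$, proving nondegeneracy.

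For the converse, given an Artinian Gorenstein $A=Q/I$ with $A_d\simeq\K$, I would fix a $\K$-linear isomorphism $\phi\colon A_d\to\K$ and form the composition $\Phi\colon Q_d\twoheadrightarrow A_d\xrightarrow{\phi}\K$. By perfectness of the apolarity pairing in degree $d$, there is a unique $f\in R_d$ with $\Phi(\alpha)=\alpha(f)$ for all $\alpha\in Q_d$; since $\Phi\neq 0$, $f\neq 0$. It remains to show $\Ann(f)=I$. For $I\subseteq\Ann(f)$: take $\alpha\in I_k$ and any $\beta\in Q_{d-k}$; then $\beta(\alpha(f))=(\beta\alpha)(f)=\Phi(\overline{\beta\alpha})=0$ because $\beta\alpha\in I$, and nondegeneracy forces $\alpha(f)=0$. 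For $\Ann(f)\subseteq I$: if $\alpha\in\Ann(f)_k$ but $\bar\alpha\neq 0$ in $A_k$, the Gorenstein hypothesis provides $\bar\beta\in A_{d-k}$ with $\overline{\alpha\beta}\neq 0$, contradicting $\Phi(\overline{\alpha\beta})=(\alpha\beta)(f)=\beta(\alpha(f))=0$.

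The main obstacle is essentially bookkeeping: one must consistently pass between three avatars of the same pairing (the $Q\times R\to\K$ apolarity, the multiplication $A_i\times A_{d-i}\to A_d$, and the composed map $\Phi$) and be careful that the argument is carried out piece-by-piece in each graded component. Once the apolarity pairing is set up cleanly, both implications reduce to the single symmetric principle that $\Ann(f)$ is recovered from $I$, and conversely, through apolarity.
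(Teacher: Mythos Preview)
The paper does not give its own proof of this theorem; it states the result and refers the reader to \cite[Theorem 2.1]{MW}. Your argument is correct and is precisely the standard proof via the apolarity pairing that one finds in \cite{MW} and \cite{IK}: the ``if'' direction uses nondegeneracy of $Q_k\times R_k\to\K$ to manufacture a multiplication partner for any nonzero class, and the converse represents a socle generator by some $f\in R_d$ and then checks $I=\Ann(f)$ degree by degree using the Gorenstein duality. One minor remark: in the ``if'' direction you only wrote down nondegeneracy on one side of $A_i\times A_{d-i}\to A_d$; of course the same argument with $i$ and $d-i$ swapped gives the other side, forcing $\dim A_i=\dim A_{d-i}$ and hence perfectness, but it is worth saying explicitly.
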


\begin{defin} \rm With the previous notation, let $A= \displaystyle \bigoplus_{i=0}^dA_i = Q/I$ be an Artinian Gorenstein $\K$-algebra with $I = \Ann(f)$, $I_1=0$ and $A_d \neq 0$. In this case, the form is called concise. The socle degree of $A$ is $d$ which coincides with the degree of the form $f$. By abuse of notation, we say that the codimension of $A$ is the codimension of the ideal $I \subset Q$ which, in this case, coincides with its embedding dimension, that is, $\codim A = n+1$.
\end{defin}

We now recall the so called Lefschetz properties for a standard graded Artinian Gorenstein $\K$-algebra.

\begin{defin}\rm  Let $A = \displaystyle \bigoplus_{i=0}^dA_i$ be a standard graded Artinian Gorenstein $\K$-algebra.
\begin{enumerate}
 \item[(i)] We say that $A$ has the Strong Lefschetz property (SLP) if there is $L \in A_1$ such that the $\K$-linear multiplication maps $\bullet L^{d-2i}:A_i \to A_{d-i}$ are isomorphisms for $i=1,\ldots,\lfloor \frac{d}{2}\rfloor$. 
 \item[(ii)] We say that $A$ has the Weak Lefschetz property (WLP) if there is $L \in A_1$ such that the $\K$-linear multiplication maps $\bullet L:A_i \to A_{i+1}$ are of maximal rank for $i=0,\ldots,d$. 
\end{enumerate}
 
\end{defin}

Let $A=Q/\Ann(f)$ be a standard graded Artinian Gorenstein $K$-algebra of socle degree $d$, 
Let $k\le l\leq d$ be two integers and let $\ba_k=(\alpha_1,\ldots,\alpha_{m_k})$ be a $K$-linear basis of $A_k$ and 
$\ba_l=(\beta_1,\ldots,\beta_{m_l})$ be a $K$-linear basis of $A_l.$

\begin{defin}\rm
We call mixed Hessian of $f$ of mixed order $(k,l)$ with respect to the basis $\ba_k$ and $\ba_l$ the matrix: 
  $$\Hess_f^{(k,l)}:=[ \alpha_i\beta_j(f)]_{m_k \times m_l}.$$
Moreover, we define $\Hess_f^k=\Hess_f^{(k,k)}$ and $\hess_f^k = \det(\Hess_f^k)$ the Hessian matrix of $k$-th order and 
the Hessian of $k$-th order of $f$ respectively. Note that $\hess_f=\hess_f^1$.
  \end{defin}

 The next result is a generalization of \cite[Theorem 4]{Wa1} and \cite[Theorem 3.1]{MW}.
 It was proved in \cite[Corollary 2.5]{GZ2}.

\begin{thm}\cite{GZ2} {\bf (Hessian criteria for Strong and Weak Lefschetz elements)}\label{thm:hessiancriteria}

Let $A = Q/\operatorname{Ann}_Q(f)$ be a standard graded Artinian Gorenstein algebra of codimension $n+1$ and socle degree $d$ and let $L = a_0x_0+\ldots+a_rx_r\in A_1$. 
The map $\bullet L^{l-k}: A_k \to A_l$, for $k < l \leq \frac{d}{2}$, has maximal rank if and only if the (mixed) Hessian matrix $\Hess_f^{(d-l,k)}(a_0,\ldots,a_r)$ has 
maximal rank. 
In particular, we get the following:
\begin{enumerate}
 \item {\bf (Strong Lefschetz Hessian criterion, \cite{Wa1}, \cite{MW})} $L$ is a strong Lefschetz element of $A$ if and only if 
$\hess^k_f(a_0,\ldots, a_r)\neq 0$ for all $k=0,1,\ldots, [d/2]$.
\item {\bf (Weak Lefschetz Hessian criterion)} $L \in A_1$ is a weak Lefschetz element of $A$ if and only if either 
$d=2q+1$ is odd and $\hess^q_f(a_0,\ldots, a_r)\neq 0$ or $d=2q$ is even and $\Hess^{(q-1,q)}_f(a_0,\ldots, a_r)$ has maximal rank.
\end{enumerate}
\end{thm}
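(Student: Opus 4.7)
The plan is to exploit the Gorenstein perfect pairing $A_l \times A_{d-l} \to A_d \cong \K$, $(\phi,\psi) \mapsto \phi\psi(f)$, to convert the multiplication map into a bilinear form whose Gram matrix is (up to a scalar) the evaluated mixed Hessian. Since the pairing is non-degenerate, the linear map $\bullet L^{l-k}: A_k \to A_l$ has the same rank as the bilinear pairing
$$B: A_k \times A_{d-l} \to \K, \qquad B(\alpha,\gamma) = \alpha \gamma L^{l-k}(f),$$
so the theorem will be reduced to writing the Gram matrix of $B$ in the given bases.

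In the bases $\ba_k = (\alpha_1,\ldots,\alpha_{m_k})$ and $\ba_{d-l} = (\gamma_1,\ldots,\gamma_{m_{d-l}})$, the $(i,j)$ entry of this Gram matrix is $L^{l-k}(\gamma_j \alpha_i(f))$, using that $Q$ is commutative. The main calculational input is the identity $L^e(h) = e!\, h(a_0,\ldots,a_r)$, valid for every $h \in R_e$ and $L = \sum a_i X_i$ and easily verified on monomials (or via Taylor's formula). Applying it with $h = \gamma_j \alpha_i(f) \in R_{l-k}$ and $e = l-k$, the Gram matrix turns out to equal $(l-k)!\cdot \Hess_f^{(d-l,k)}(a_0,\ldots,a_r)^T$; hence the rank of $\bullet L^{l-k}$ coincides with the rank of $\Hess_f^{(d-l,k)}(a_0,\ldots,a_r)$, which is the main equivalence.

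Parts (1) and (2) will follow by specialization. For the SLP, the critical maps $\bullet L^{d-2i}: A_i \to A_{d-i}$ correspond to $k=i$, $l=d-i$, so $d-l=i$ and one recovers the square Hessian $\Hess_f^i$, bijectivity being equivalent to $\hess_f^i(a_0,\ldots,a_r)\neq 0$. For the WLP, by Gorenstein duality it suffices to test maps crossing the middle of the Hilbert function: when $d = 2q+1$, the critical map is $\bullet L: A_q \to A_{q+1}$, reducing to a square bilinear form on $A_q \times A_q$ with determinant $\hess_f^q(a_0,\ldots,a_r)$; when $d = 2q$, it is $\bullet L: A_{q-1} \to A_q$, whose rank is given by $\Hess_f^{(q-1,q)}(a_0,\ldots,a_r)$.

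The main obstacle is not the strategy itself but the bookkeeping: one must set up carefully the non-degenerate pairing and confirm that passing from the linear map to the bilinear form preserves rank, then track the factorial $(l-k)!$ and the transpose when identifying the Gram matrix with the mixed Hessian. A minor subtlety for the odd-degree WLP is that the case $l = q+1$ slightly exceeds the stated bound $l \leq d/2$, but the symmetry of the Hilbert function in an AG algebra makes this harmless.
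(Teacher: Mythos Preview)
The paper does not prove this theorem; it is quoted from \cite{GZ2} (with the SLP case going back to \cite{Wa1,MW}), so there is no in-paper argument to compare against. Your proof is correct and is essentially the standard one appearing in those references: the Gorenstein perfect pairing $A_l\times A_{d-l}\to A_d\simeq\K$ identifies the rank of $\bullet L^{l-k}$ with that of the bilinear form $(\alpha,\gamma)\mapsto (\gamma\alpha L^{l-k})(f)$, and the evaluation identity $L^e(h)=e!\,h(a_0,\ldots,a_r)$ for $h\in R_e$ turns the Gram matrix into $(l-k)!$ times the transpose of the mixed Hessian evaluated at the dual point of $L$. The reduction of WLP to the single middle map is also the standard fact for AG algebras (injectivity at the middle propagates downward because a nonzero element below degree $d$ can always be multiplied into the middle degree, and surjectivity propagates upward by duality), so your handling of parts (1) and (2) is fine. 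One cosmetic remark: in the even case the theorem as stated records $\Hess_f^{(q-1,q)}$, whereas specializing the main equivalence with $k=q-1$, $l=q$, $d=2q$ yields $\Hess_f^{(d-l,k)}=\Hess_f^{(q,q-1)}$; these are transposes of one another, so the maximal-rank condition is unaffected.
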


\subsection{Waring rank, border rank and Cactus rank}

Let $f \in R=\C[x_0,...,x_n]_{d}$ be a form. Any  expression of the form $f=l_{1}^d+...+l_{k}^d$, where $l_{1},...,l_{k}$ are linear forms on $R$, will be called a power sum decomposition of $f$. 
\begin{defin}\rm
The \emph{Waring rank} of $f$ over $R$ is the least number of terms in a power sum decomposition of $f$, we denote it by $\operatorname{wrk}(f)$. 
\end{defin}

In \cite{Syl,Syl2} Sylvester determined the Waring rank of homogeneous polynomials of two variables, this results can be summarized in the following Theorem.

\begin{thm}{\bf (Sylvester)}\label{thm:sylvester} The Waring rank of a generic polynomial $f\in\mathbb{K}[x,y]_{d}$ is $\left \lceil{\frac{d-1}{2}}\right\rceil$.
\end{thm}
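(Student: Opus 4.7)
The plan is to prove Sylvester's theorem via apolarity, which for binary forms takes a particularly sharp form. By the apolarity lemma --- a direct consequence of Macaulay's Theorem \ref{G=ANNF} --- one has $\operatorname{wrk}(f) \leq s$ if and only if there is a reduced set of $s$ points in $\mathbb{P}^1$ whose defining ideal is contained in $\operatorname{Ann}(f) \subset Q = \mathbb{K}[X,Y]$. As every reduced zero-dimensional subscheme of $\mathbb{P}^1$ of length $s$ is cut out by a single squarefree degree-$s$ binary form, this translates into: $\operatorname{wrk}(f)$ equals the smallest $s$ for which $\operatorname{Ann}(f)_s$ contains a squarefree form.

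Next I would exploit the codimension-two Gorenstein structure. The algebra $A = Q/\operatorname{Ann}(f)$ is Artinian Gorenstein of codimension two, and by the structure theorem for such ideals, $\operatorname{Ann}(f) = (g_1, g_2)$ is a complete intersection with coprime generators of degrees $d_1 \leq d_2$ satisfying $d_1 + d_2 = d+2$. For generic $f$, upper-semicontinuity of the Hilbert function together with an explicit witness (for instance $f = x^d + y^d$) shows that $A$ attains the maximal Hilbert function $a_i = \min(i+1, d-i+1)$. Reading off the first jump of $\dim \operatorname{Ann}(f)_i$ from this sequence pins down $d_1 = \lfloor d/2 \rfloor + 1$ (so $d_1 = d_2$ when $d$ is even, and $d_2 = d_1 + 1$ when $d$ is odd).

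Finally, since being squarefree is a Zariski-open condition on binary forms of fixed degree, for generic $f$ the smaller generator $g_1$ has $d_1$ distinct roots, yielding the upper bound $\operatorname{wrk}(f) \leq d_1$. The reverse inequality is automatic because $\operatorname{Ann}(f)_s = 0$ for $s < d_1$ by the Hilbert function computation, so no squarefree element of lower degree can annihilate $f$. The main technical point will be verifying the two open genericity conditions simultaneously --- maximal Hilbert function for $A$ and squarefree minimal generator $g_1$ --- which is handled by exhibiting a single polynomial where both hold and invoking openness in $R_d$.
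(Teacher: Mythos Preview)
The paper does not prove this statement; it is quoted as Sylvester's classical result with a citation, so there is no argument in the paper to compare against. Your route via apolarity and the codimension-two Gorenstein structure theorem is the standard modern proof and is essentially sound. Note, incidentally, that the value you actually compute, $d_1=\lfloor d/2\rfloor+1=\lceil(d+1)/2\rceil$, is the correct generic Waring rank of a binary $d$-form; the printed $\lceil(d-1)/2\rceil$ is a misprint (already for $d=2$ it gives $1$, while a generic binary quadratic has rank $2$), so your proof establishes the correct statement rather than the one literally written.

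There is, however, one genuine slip. Your proposed witness $f=x^d+y^d$ does \emph{not} have maximal Hilbert function: here $XY\in\Ann(f)_2$, so $\Hilb(A)=(1,2,2,\ldots,2,1)$ and the minimal generator sits in degree $2$, not $\lfloor d/2\rfloor+1$. A witness that works for both open conditions simultaneously is $f=\sum_{i=0}^{q}l_i^{\,d}$ with $q=\lfloor d/2\rfloor$ and the $l_i\in R_1$ pairwise non-proportional. For $k\le q$, any $\alpha\in\Ann(f)_k$ satisfies $\sum_i \alpha(P_i)\,l_i^{\,d-k}=0$ with $P_i=l_i^{\perp}$; since distinct points on the rational normal curve are linearly independent, the $l_i^{\,d-k}$ are independent, forcing $\alpha$ to vanish at $q+1$ distinct points of $\P^1$ and hence $\alpha=0$. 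This gives the maximal Hilbert function, and the product of the $q+1$ dual linear forms is a squarefree element of $\Ann(f)_{q+1}$. With this replacement your openness-plus-witness argument is complete and yields $\wrk(f)=\lfloor d/2\rfloor+1$ for generic $f$.
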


In \cite{AH1},\cite{AH2} and \cite{AH2}, Alexander and Hirschowitz described the Waring rank for a generic form.
\begin{thm}{\bf(Alexander-Hirschowitz)} A generic $f\in \C[x_0,\ldots,x_n]_d$ has Waring rank $\operatorname{wrk}(f) = \left \lceil{\frac{\binom{n+d}{d}}{n+1}}\right\rceil$, except for:
\begin{enumerate}
 \item[(i)] $(n,2)$, in this case $\operatorname{wrk}(f) =n+1$;
 \item[(ii)] $(n,d) = (2,4), (3,4), (4,3), (4,4)$, in this case $\operatorname{wrk}(f) = \left \lceil{\frac{\binom{n+d}{d}}{n+1}}\right\rceil+1$.
\end{enumerate}

\end{thm}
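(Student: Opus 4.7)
The plan is to translate the Waring-rank statement into a question about the dimensions of secant varieties of Veronese varieties and then into a statement about Hilbert functions of schemes of double points. The equivalent reformulation is that the Waring rank of a generic $f$ equals the smallest $s$ with $\dim \sigma_s(\mathcal{V}_d(\mathbb{P}^n)) = \binom{n+d}{d}-1$, where $\sigma_s$ denotes the $s$-th secant variety. The expected dimension is $\min(s(n+1)-1,\binom{n+d}{d}-1)$, so one must show that this expectation holds except in the four listed families.

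The first step is to apply Terracini's lemma: the tangent space $T_{[f]} \sigma_s(\mathcal{V}_d(\mathbb{P}^n))$ at a generic point $f=l_1^d+\cdots+l_s^d$ is (up to projectivization) the span of the affine tangent spaces $T_{[l_i^d]} \mathcal{V}_d(\mathbb{P}^n)$, which by direct computation equals $l_i^{d-1} \cdot R_1$. Via apolarity (the Macaulay-Matlis duality already recalled), the codimension of this linear span in $R_d$ is dual to the value at $d$ of the Hilbert function of the scheme $Z=2p_1+\cdots+2p_s$ of $s$ general double points in $\mathbb{P}^n$, where $p_i=[l_i]$. Thus the theorem reduces to the assertion that $h_Z(d) = \min(s(n+1),\binom{n+d}{d})$ except in the listed cases.

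The main engine would then be the Horace method combined with induction on $n$ and $d$. Concretely, one specializes as many of the double points as possible onto a hyperplane $H \cong \mathbb{P}^{n-1}$ and uses the residual exact sequence
\begin{equation*}
0 \longrightarrow \mathcal{I}_{\operatorname{Res}_H Z}(d-1) \longrightarrow \mathcal{I}_Z(d) \longrightarrow \mathcal{I}_{Z \cap H, H}(d) \longrightarrow 0
\end{equation*}
to reduce to a trace problem on $H$ (simple points from residues of double points) and a residual problem on $\mathbb{P}^n$ in degree $d-1$. The naive Horace method suffices for many cases, but for $d=3,4$ and small $n$ one needs the differential Horace method introduced by Alexander and Hirschowitz, where one replaces a double point by a suitable ``half-degenerate'' scheme in order to keep both trace and residue in the inductively controlled range. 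The base cases $d=1$ (trivial) and $d=2$ (where quadrics give the first exception $\operatorname{wrk}=n+1$ since sums of squares have rank $\leq n+1$) must be treated by hand.

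The hardest part will be the verification and isolation of the four exceptional families $(n,2)$ and $(2,4),(3,4),(4,3),(4,4)$. Geometrically, each exception corresponds to a classical defective secant variety: quadrics are defective because of the symmetric matrix stratification (the secant to the Veronese of quadrics parametrizes rank-$s$ symmetric matrices, with known deficient codimension), while the four sporadic cases come from unexpected hypersurfaces through double points (a quartic double through $5$ general points of $\mathbb{P}^2$, a quartic through $9$ double points in $\mathbb{P}^3$, the cubic through $7$ double points in $\mathbb{P}^4$, and the quartic through $14$ double points in $\mathbb{P}^4$). The proof must both exhibit these defects directly (e.g. via the $2$-uple embedding of a linear subspace or of a rational normal curve) and show by the Horace induction that no other defect can occur, which is exactly where the combinatorial delicacy of the Alexander-Hirschowitz argument lies.
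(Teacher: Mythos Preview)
The paper does not prove this theorem at all: it is stated as a background result and attributed to Alexander and Hirschowitz via the citations \cite{AH1,AH2,AH3}, with no argument given. So there is no ``paper's own proof'' to compare against.

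That said, your outline is an accurate sketch of the actual Alexander--Hirschowitz proof: the reduction via Terracini's lemma to the expected Hilbert function of a general union of double points, followed by the (differential) Horace induction to handle the generic case and isolate the four sporadic defects plus the quadratic family. This is precisely the route taken in the cited papers, so in that sense your approach matches the literature the paper is relying on. Just be aware that what you have written is a high-level plan rather than a proof: the combinatorial bookkeeping in the Horace specialization and the verification that the differential lemma applies at each inductive step are the substantive content, and none of that is present here. For the purposes of this paper, a citation is all that is expected.
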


From a more geometric viewpoint we consider the following picture. Given a power sum decomposition $f=l_{1}^d+...+l_{k}^d$, 
consider $P_i = l_i^{\perp} \in \P^n$. We will identify the ideal of points of $\Gamma = \{P_1, \ldots, P_s\}$, $I_{\Gamma}$ with an ideal in $Q$, by the differential version of Macaulay-Matlis duality. Under this identification we have the following useful Lemma whose proof can be found in \cite[Lemma 1.31]{IK}.

\begin{lema}{\bf Apolarity Lemma} \label{lemma:apolarity}A form $f \in R_d$ ca be decomposed as\[f=l_{1}^d+...+l_{k}^d\] 
with $l_i$ pairwise linearly independent linear forms if and only if $I_{\Gamma } \subset \Ann_f$. 
\end{lema}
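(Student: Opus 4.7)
My plan is to reduce both implications to the basic identity linking the differentiation action of $Q$ on $R$ to evaluation of polynomials at a point. A direct monomial computation shows that for any linear form $l = a_0 x_0 + \cdots + a_n x_n$, the corresponding point $P = l^\perp = [a_0:\cdots:a_n]$, and any $\alpha \in Q_e$ with $e \le d$,
\[
\alpha(l^d) \;=\; \frac{d!}{(d-e)!}\, \alpha(a_0,\ldots,a_n)\, l^{d-e},
\]
while $\alpha(l^d) = 0$ whenever $e > d$. This identity yields the key containment $I_P \subset \Ann(l^d)$ that powers both directions.

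The forward implication is immediate: if $f = l_1^d + \cdots + l_k^d$ with $P_i = l_i^\perp$ and $\Gamma = \{P_1,\ldots,P_k\}$, then every $\alpha \in I_\Gamma = \bigcap_i I_{P_i}$ kills each summand $l_i^d$ by the displayed formula, so $\alpha(f) = 0$ and $I_\Gamma \subset \Ann(f)$.

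For the converse, assume $I_\Gamma \subset \Ann(f)$. I would invoke Macaulay--Matlis duality to pass to inverse systems: the duality is contravariant and exchanges intersections of ideals with sums of inverse systems, so the degree-$d$ piece of the inverse system of $I_\Gamma$ equals the sum over $i$ of the degree-$d$ pieces of the inverse systems of the $I_{P_i}$, and the hypothesis forces $f$ to lie in this sum. A short coordinate change identifies the inverse system of $I_{P_i}$ in degree $d$ with the line $\langle l_i^d \rangle$: after changing variables so that $l_i = x_0$, the ideal $(I_{P_i})_1$ is $\langle X_1,\ldots,X_n \rangle$, and any degree-$d$ form annihilated by $X_1,\ldots,X_n$ depends only on $x_0$, hence is a scalar multiple of $l_i^d$. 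This produces $f = \sum_i c_i l_i^d$ with $c_i \in \C$, and absorbing each $c_i^{1/d}$ into $l_i$ completes the decomposition; the $l_i$ remain pairwise linearly independent precisely because the $P_i$ are distinct points.

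The main technical hurdle is the duality step: one must justify, within the Macaulay--Matlis framework recalled in the preceding subsection, that the containment $I \subset \Ann(f)$ dualizes to membership of $f$ in the inverse system of $I$ in degree $d$, and that inverse systems convert intersections of ideals into sums. Granted this, the lemma reduces to the monomial calculation and the one-line change of coordinates indicated above.
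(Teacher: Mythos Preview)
The paper does not supply its own proof of the Apolarity Lemma; it simply cites \cite[Lemma~1.31]{IK}. Your argument is correct and is essentially the standard one found there: the forward direction follows at once from the differentiation identity $\alpha(l^d)=\tfrac{d!}{(d-e)!}\,\alpha(P)\,l^{d-e}$, and the converse is the linear-algebra observation that in degree $d$ the inverse system of $I_\Gamma=\bigcap_i I_{P_i}$ is $\bigl(\bigcap_i (I_{P_i})_d\bigr)^{\perp}=\sum_i \bigl((I_{P_i})_d\bigr)^{\perp}=\sum_i \langle l_i^d\rangle$, placing $f$ in the span of the $l_i^d$. One small point worth tightening: the passage from ``$I\subset\Ann(f)$ implies $f\in I^{-1}_d$'' to the degree-$d$ orthogonal $(I_d)^{\perp}$ uses that any nonzero $\alpha(f)$ with $\alpha\in I_e$, $e<d$, can be detected by some $\beta\in Q_{d-e}$, and $\beta\alpha\in I_d$; you gesture at this (``the containment dualizes'') but it is the only step that deserves an explicit sentence rather than an appeal to general duality.
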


\begin{defin} \rm Let $X \subset \P^n$ be a projective variety. The $s$-th secant variety of $X$ is 
\[S^s(X) = \overline{\{<p_1,\ldots,p_s>|p_i \in X\}}\subset \P^n.\]
 
\end{defin}

Consider $R=\C[x_0, \ldots, x_n]$ and $R_d$ its graded part of degree $d$. 

\begin{defin}\rm The Veronese map $\mathcal{V}_d:\P(R_1) \to \P(R_d) $ is the morphism given by $\mathcal{V}_d([l])=[l^d]$. 
Its image is called the Veronese variety $\mathcal{V}_d(\P^n)$.
 
\end{defin}

\begin{defin}\rm Let $f \in R_d$ and $p =[f]\in  \P(R_d) $ the corresponding point. The border rank of $f$ is the minimal integer $s=\underline{rk}(f)$ such that $p \in S^s(\mathcal{V}_d(\P(R_d))$. 
 
\end{defin}

Notice that $\underline{rk}(f)=s$ means that $[f]$ is a limit of forms with Waring rank $s$.

In the sequel we will need the following result about the border rank of monomials. 

\begin{thm}\cite[Theorem 11.2]{LT} \label{thm:border_monomials} If $e_0\geq e_1 \geq \ldots \geq e_n$, then 
\[ \underline{rk}(x_0^{e_0}x_1^{e_1}\ldots x_n^{e_n})\leq (e_1+1)\ldots(e_n+1).\]
\end{thm}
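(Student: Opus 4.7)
The plan is to combine the Apolarity Lemma in its scheme-theoretic form with the smoothability of zero-dimensional complete intersections. Write $f = x_0^{e_0}x_1^{e_1}\cdots x_n^{e_n}$, so that $\Ann(f) = (X_0^{e_0+1}, X_1^{e_1+1}, \ldots, X_n^{e_n+1})$, and set $N := (e_1+1)(e_2+1)\cdots(e_n+1)$. The whole argument is driven by producing a smoothable apolar subscheme of length $N$ whose linear span contains $[f]$.

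First I would exhibit an apolar scheme of length $N$. Consider $I_Z := (X_1^{e_1+1}, \ldots, X_n^{e_n+1}) \subset Q$, obtained from $\Ann(f)$ by discarding the single generator attached to the largest exponent $e_0$. In the affine chart $X_0 = 1$ this is a zero-dimensional complete intersection at the origin of length $N$, and globally it defines a fat-point scheme $Z$ of length $N$ supported at $p_0 = [1:0:\cdots:0] \in \P^n$. Since every generator of $I_Z$ already lies in $\Ann(f)$, the Apolarity Lemma~\ref{lemma:apolarity} (in scheme-theoretic form) gives $[f] \in \langle Z\rangle \subset \P(R_d)$.

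Next I would smooth $Z$. For generic scalars $\lambda_i^{(0)}, \ldots, \lambda_i^{(e_i)} \in \C$, pairwise distinct as $k$ varies, and for a parameter $t$, set
\[
Z_t = V\!\left(\prod_{k=0}^{e_1}\bigl(X_1 - t\lambda_1^{(k)}X_0\bigr),\ \ldots,\ \prod_{k=0}^{e_n}\bigl(X_n - t\lambda_n^{(k)}X_0\bigr)\right).
\]
For $t \neq 0$, $Z_t$ is the transverse intersection of $n$ unions of $(e_i+1)$ parallel hyperplanes, which is a reduced set of $N$ distinct points, all tending to $p_0$ as $t \to 0$. The length is identically $N$, so the family is flat, and the flat limit at $t = 0$ is precisely $Z$. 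This exhibits $Z$ as smoothable.

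Finally, for each $t \neq 0$, the reduced Apolarity Lemma applied to $Z_t$ shows that every $g \in \langle Z_t\rangle \subset \P(R_d)$ is a sum of $N$ $d$-th powers of linear forms, hence has Waring rank at most $N$. The spans $\langle Z_t\rangle$ form a family in a Grassmannian of subspaces of $\P(R_d)$, and by properness their limit contains $\langle Z_0\rangle = \langle Z\rangle$, which in turn contains $[f]$. Therefore $[f]$ is a limit of forms of Waring rank $\leq N$, which is exactly the border rank inequality $\underline{rk}(f) \leq N$. The step I expect to be the main obstacle is this last identification: rigorously showing that the limit in the Grassmannian of the spans $\langle Z_t\rangle$ coincides with $\langle Z_0\rangle$ requires either constancy of $\dim_{\C}(Q/I_{Z_t})_d$ along the family (to stabilize the Hilbert function in degree $d$) or, equivalently, an explicit one-parameter family $f_t \in \langle Z_t\rangle$ with $f_t \to f$; this is the standard but non-trivial link between smoothable apolar schemes and border rank.
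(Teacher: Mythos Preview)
The paper does not give its own proof of this statement; it merely quotes the result from \cite{LT}. So there is nothing in the paper to compare your argument against.

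That said, your approach is correct and is essentially the standard one: exhibit the complete intersection $I_Z=(X_1^{e_1+1},\ldots,X_n^{e_n+1})\subset\Ann(f)$ as a smoothable apolar scheme of length $N=(e_1+1)\cdots(e_n+1)$, and conclude $\underline{rk}(f)\le sr(f)\le N$. Your explicit smoothing by products of linear forms is fine. The issue you flag at the end is genuine but easily resolved in this case: since $d=e_0+e_1+\cdots+e_n\ge e_1+\cdots+e_n$, the Hilbert function of $Q/I_{Z_0}$ already equals $N$ in degree $d$ (every monomial $X_0^{a_0}\cdots X_n^{a_n}$ with $a_i\le e_i$ for $i\ge1$ and $\sum a_i=d$ is realized), and for a flat family of length-$N$ schemes this forces $h_{Z_t}(d)=N$ for all $t$. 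Hence the spans $\langle Z_t\rangle$ all have the same dimension $N-1$, so they vary in a Grassmannian and the limit of $\langle Z_t\rangle$ is exactly $\langle Z_0\rangle\ni[f]$. With that sentence added, the argument is complete.
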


\begin{defin}\rm Let $f \in R_d$ and $p =[f]\in  \P(R_d) $ the corresponding point. The cactus rank of $f$ is the minimal integer $s=cr(f)$ such that there is a length $s$ finite scheme $K \subset \mathcal{V}_d(\P(R_d))$ such that $p \in <K>$. 
 
\end{defin}

\begin{defin}\rm Let $f \in R_d$ and $p =[f]\in  \P(R_d) $ the corresponding point. The smoothable rank of $f$ is the minimal integer $s=sr(f)$ such that there is a a length $s$ {\it smoothable} finite scheme $K \subset \mathcal{V}_d(\P(R_d))$ such that $p \in <K>$. 
 
\end{defin}

\begin{rmk}\rm It is clear, from the definitions that $wrk(f) \geq \underline{r}(f)$ and that $cr(f)\leq sr(f)$. 
See \cite{BBM} for a detailed discussion about the relations among various notions of rank of a form. We know that:
\[\underline{rk}(f) \leq sr(f) \leq wrk(f).\]
\[cr(f)\leq sr(f) \leq wrk(f).\]
 Moreover, $cr(f)$ and $\underline{rk}(f)$ are incomparable. For instance, in \cite{BR} there are examples of forms for which 
 $cr(f) < \underline{rk}(f)$. On the other hand, in \cite{BB,HMV} and in the present work we give examples of forms for which 
 $ \underline{rk}(f)< cr(f)$, these forms are wild, in a sense that we precise in the third section. 
\end{rmk}

\begin{ex}\rm In \cite{BB} the authors showed that the form $f=xu^2+y(u+v)^2+zv^2 \in \C[x,y,z,u,v]$ has 
\[wrk(f)=9,\ \underline{rk}(f)=5\ \text{and} \ sr(f)=cr(f)=6.\]
Moreover, in \cite{HMV}, the authors showed that inequality $ \underline{rk}(f)< cr(f)$ was a consequence of two properties of $f$.
\begin{enumerate}
 \item[(i)] $f$ has minimal border rank, that is $\underline{rk}=a_1=5$;
 
 \item[(ii)] $\hess_f =0$.
\end{enumerate}
Concise cubic forms with vanishing Hessian were studied by Perazzo in \cite{Pe} and revisited in \cite{GRu}. 
In $\C[x,y,z,u,v]$ there is only one concise cubic form with vanishing Hessian up to projective transformations.  
 
\end{ex}

\section{Hessian matrices of a form in a power sum decomposition}

Let $R=\C[x_0,\ldots,x_n]$ be a polynomial ring and $Q=\C[X_0,\ldots,X_n]$ be the associated ring of differential operators. 
Let $f \in R_d$ be a form and let $A(f)=Q/Ann(f)$ be the associated AG algebra. Consider a power sum decomposition of $f$. 
$$f=l_1^d+l_2^d+\ldots+l_s^d.$$
We are considering $s\geq \wrk(f)$, that is, it is not necessarily the Waring decomposition.  \\ 

Let $\{\alpha_1,\ldots,\alpha_{m_k}\}$ be a basis of the $\C$-vector space $A_k$, and $\{\beta_1,\ldots,\beta_{m_l}\}$ be a basis of the $\C$-vector space $A_{d-l}$  for some $k<l\leq d-k$ and $k \in \{1,\ldots,\lfloor\frac{d}{2}\rfloor\}$. We can suppose without loss of generality that $m_k\leq m_l$, by unimodality.\\

For any linear form $l_r = \displaystyle \sum_{t=1}^n a_{tr}x_t$ and for any $\alpha_j=\displaystyle \prod_{t=1}^nX_t^{e_{tj}}$ we get: 
\begin{equation}\label{eq:derivadadapotencia}
 \alpha_j(l_r^d) = \frac{d!}{(d-k)!}l_r^{d-k}\displaystyle \prod_{t=1}^na_{tr}^{e_{tj}}.
\end{equation}

We define $w_{jr}^{(k)}=\displaystyle \prod_{t=1}^na_{tr}^{e_{tj}} \in \C$ for $j =1, \ldots, m_k$. For any $\beta_i=\displaystyle \prod_{k=1}^nX_t^{f_{ti}}$ let $w_{ir}^{(d-l)}=\displaystyle \prod_{k=1}^na_{tr}^{f_{ti}}$ with $i=1,\ldots,m_l$. Using Equation \ref{eq:derivadadapotencia}, we get:
 
\begin{equation}\label{eq:elementogeraldohessiano}
 \beta_i \alpha_j(l_r^d) = \frac{d!}{(l-k)!}l_r^{l-k}w_{ir}^{(d-l)}w_{jr}^{(k)}.
\end{equation} 

Let $ W_k= [w_{jr}^{(k)}]_{m_k \times s}$, $ W_{d-l}= [w_{ir}^{(d-l)}]_{m_l \times s}$ and  $D_{k,l}= \operatorname{Diag}(l_1^{l-k}, l_2^{l-k}, \ldots, l_s^{l-k})$. 
Sometimes we omit the index $(k)$ if it is clear in the context, especially when $l=d-k$.
\begin{lema} With the previous notations, we get:
\begin{enumerate}
 \item $$\Hess_f^{(d-l,k)} = \frac{d!}{(l-k)!} [W_{d-l}]_{m_l\times s}[D_{k,l}]_{s \times s}[W_k]_{s \times m_k}^t.$$
 
 \item $$\Hess_f^k = \frac{d!}{(d-2k)!} [W_{k}]_{m_k\times s}[D_{k,d-k}]_{s \times s}[W_k]_{s \times m_k}^t.$$
 
\end{enumerate}

\end{lema}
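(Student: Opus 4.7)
The plan is to derive both identities as a direct bookkeeping consequence of Equation \eqref{eq:elementogeraldohessiano} and linearity of the differential operators, with the only content being to match the resulting sum with a product of three matrices.

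First, I would apply the operator $\beta_i\alpha_j$ to the power sum decomposition $f=\sum_{r=1}^s l_r^d$. By $\C$-linearity,
\[
\beta_i\alpha_j(f) \;=\; \sum_{r=1}^s \beta_i\alpha_j(l_r^d),
\]
and Equation \eqref{eq:elementogeraldohessiano} evaluates each summand as $\tfrac{d!}{(l-k)!}\, l_r^{l-k}\, w_{ir}^{(d-l)}\, w_{jr}^{(k)}$. Hence the $(i,j)$-entry of $\Hess_f^{(d-l,k)}$ equals
\[
\frac{d!}{(l-k)!}\sum_{r=1}^{s} w_{ir}^{(d-l)}\, l_r^{l-k}\, w_{jr}^{(k)}.
\]

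Second, I would recognize this sum as an entry of a triple matrix product. With $W_{d-l}=[w_{ir}^{(d-l)}]$, $D_{k,l}=\operatorname{Diag}(l_1^{l-k},\dots,l_s^{l-k})$, and $W_k^t=[w_{jr}^{(k)}]^t$, the sum over $r$ is exactly the $(i,j)$-entry of $W_{d-l}\,D_{k,l}\,W_k^t$: the diagonal matrix $D_{k,l}$ rescales the $r$-th column of $W_{d-l}$ by $l_r^{l-k}$, and then the inner product with the $j$-th column of $W_k$ produces the stated sum. This proves part (1); part (2) is the specialization $l=d-k$, for which $(l-k)!=(d-2k)!$ and the two outer matrices coincide, so $\Hess_f^k = \tfrac{d!}{(d-2k)!}\,W_k D_{k,d-k} W_k^t$.

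There is no real obstacle here beyond keeping track of the indexing conventions; the entire content of the lemma is already in Equation \eqref{eq:elementogeraldohessiano}, and the only care needed is to ensure that the rows are indexed by the $\beta_i$'s (the basis of $A_{d-l}$, of size $m_l$) and the columns by the $\alpha_j$'s (the basis of $A_k$, of size $m_k$), so that the matrix dimensions $m_l\times s$, $s\times s$, and $s\times m_k$ compose correctly.
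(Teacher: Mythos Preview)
Your proposal is correct and follows essentially the same approach as the paper: both compute the $(i,j)$-entry of $\Hess_f^{(d-l,k)}$ by applying $\beta_i\alpha_j$ term-by-term to the power sum decomposition via Equation~\eqref{eq:elementogeraldohessiano}, and then recognize the resulting sum $\sum_r w_{ir}^{(d-l)} l_r^{l-k} w_{jr}^{(k)}$ as the $(i,j)$-entry of the triple product $W_{d-l}D_{k,l}W_k^t$. The paper carries this out by explicitly writing and factoring the matrices, while you phrase it in terms of entries, but there is no substantive difference.
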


\begin{proof}\label{prop:decompositionofhessian}
 By definition $\Hess_f^{(d-l,k)}=(\beta_i \alpha_j(f))_{m_l \times m_k}$. Hence, 
 $$\Hess_f^{(d-l,k)}=(\beta_i \alpha_j(f))_{m_l \times m_k}=\frac{d!}{(l-k)!}(\displaystyle \sum_{r=1}^s l_r^{l-k}w_{ir}^{(d-l)}w_{jr}^{(k)})_{m_l \times m_k}=$$
 $$=\frac{d!}{(l-k)!}\left[\begin{array}{ccc}
     \displaystyle \sum_{r=1}^s l_r^{l-k} w_{1r}^{(d-l)}w_{1r}^{(k)} & \ldots & \displaystyle \sum_{r=1}^s l_r^{l-k} w_{1r}^{(d-l)}w_{mr}^{(k)}\\
     \ldots & \ldots & \ldots \\
     \displaystyle \sum_{r=1}^s l_r^{l-k} w_{mr}^{(d-l)}w_{1r}^{(k)} & \ldots & \displaystyle \sum_{r=1}^s l_r^{l-k} w_{mr}^{(d-l)}w_{mr}^{(k)}
    \end{array}\right]_{m_l \times m_k}
$$
$$=\frac{d!}{(l-k)!}\left[\begin{array}{ccc}
      l_1^{l-k} w_{11}^{(d-l)} & \ldots &  l_s^{l-k} w_{1s}^{(d-l)}\\
     \ldots & \ldots & \ldots \\
      l_1^{l-k} w_{m_{l}1}^{(d-l)} & \ldots &  l_s^{l-k} w_{m_{l}s}^{(d-l)}
    \end{array}\right]_{m_l \times s} \left[\begin{array}{ccc}
     w_{11}^{(k)} & \ldots & w_{m_{k}1}^{(k)}\\
     \ldots & \ldots & \ldots \\
     w_{1s}^{(k)} & \ldots &  w_{m_{k}s}^{(k)}
    \end{array}\right]_{s \times m_k} 
    $$
    
   $$=\frac{d!}{(l-k)!}\left[\begin{array}{ccc}
       w_{11}^{(d-l)} & \ldots &   w_{1s}^{(d-l)}\\
     \ldots & \ldots & \ldots \\
      w_{m_{l}1}^{(d-l)} & \ldots &  w_{m_{l}s}^{(d-l)}
    \end{array}\right]_{m \times s} \left[\begin{array}{ccc}
      l_1^{l-k}  & \ldots &  0\\
     \ldots & \ldots & \ldots \\
      0 & \ldots &  l_s^{l-k} 
    \end{array}\right]_{s \times s} \left[\begin{array}{ccc}
     w_{11}^{(k)} & \ldots & w_{m_{k}1}^{(k)}\\
     \ldots & \ldots & \ldots \\
     w_{1s}^{(k)} & \ldots & w_{m_{k}s}^{(k)}
    \end{array}\right]_{s \times m} $$ 
\end{proof}

\begin{rmk}\rm
 Sylvester proved in \cite{Syl} that $\wrk(f) \geq \rk(\Hess^k_f)$ for $k = \lfloor \frac{d}{2}\rfloor$ (see also \cite[Corollary 3.5]{Do}). If $A=A(f)$ has the SLP, it implies that $s \geq m_k$ for all $k$.

\end{rmk}

 Consider the natural exact sequence 
 \[0 \to I_k \to Q_k \to A_k \to 0.\]
 
 We can think $Q$ as a polynomial ring, in this context we identify $\P^n=\P(Q_1)$, $\P^{\nu(k,n)-1}=\P(Q_k)$  and $\P^{a_k-1}=\P(A_k)$. consider the Veronese map $\mathcal{V}_{k}:\P^{n} \to \P^{\binom{n+k}{k}}$ given by $\mathcal{V}_{k}(L)=L^k$. We get the following diagram:
 
 \[\begin{array}{ccc}
\P^n & \hookrightarrow & \P^{\nu(k,n)-1}\\
& & \downarrow \\ 
& & \P^{a_k-1}  
 \end{array}\]

 We consider the map $\mathcal{V'}_{k}:\P^n \to  \P^{a_k-1} $ the relative Veronese (see \cite{DGI}). 
 
\begin{prop}\label{P1}
Let $f=l_1^d+...+l_s^d \in R=\mathbb{K}[x_0,...,x_n]$ with $d>2k$, $k>1$ and $a_k=\dim A_{k}$. Let $P_i = l_i^{\perp}$ be the point that is dual of the hyperplane defined by $l_i$.
Consider the Veronese map $\mathcal{V}_{k}:\P^{n} \to \P^{\binom{n+k}{k}}$. Then, $$W_k=[[\mathcal{V}_{k}(P_1)]:...:[\mathcal{V}_{k}(P_s)]]_{a_k \times s}.$$ 
Moreover, $W_k$ has maximal rank. 
\end{prop}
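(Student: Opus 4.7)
The plan is to unwrap the definition of $W_k$ to identify its columns with the relative Veronese images, and then use the apolarity lemma to deduce the rank statement.

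For the first part, I would note that by definition $w_{jr}^{(k)}=\prod_{t=0}^{n}a_{tr}^{e_{tj}}$ is nothing but the value of the monomial $\alpha_j=\prod_{t=0}^{n}X_t^{e_{tj}}$ at the point $P_r=(a_{0r},\ldots,a_{nr})=l_r^{\perp}$. Hence the $r$-th column of $W_k$ is the vector $(\alpha_1(P_r),\ldots,\alpha_{a_k}(P_r))^{t}$, obtained by evaluating at $P_r$ a chosen basis $\{\alpha_1,\ldots,\alpha_{a_k}\}$ of $A_k$ lifted to $Q_k$. This is precisely the image $[\mathcal{V}'_k(P_r)]\in\P(A_k^{*})\simeq\P^{a_k-1}$, i.e.\ the composition of the ordinary Veronese $\mathcal{V}_k$ with the linear projection $\P(Q_k)\dashrightarrow\P(A_k)$ induced by $Q_k\twoheadrightarrow A_k$.

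For the rank claim, I would first observe that the apolarity lemma (Lemma \ref{lemma:apolarity}) already forces $s\geq a_k$. Indeed, from $f=\sum_i l_i^d$ one gets $I_{\Gamma}\subset\Ann(f)=I$, where $\Gamma=\{P_1,\ldots,P_s\}$, so $A=Q/I$ is a quotient of $Q/I_{\Gamma}$; as $\Gamma$ is a length-$s$ reduced scheme, $\dim(Q/I_{\Gamma})_k\leq s$, giving $a_k\leq s$. Hence maximal rank for $W_k$ means $\rank(W_k)=a_k$.

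Now suppose for contradiction that $\rank(W_k)<a_k$. Then the rows of $W_k$ are linearly dependent: there exist scalars $c_1,\ldots,c_{a_k}$, not all zero, with $\sum_j c_j w_{jr}^{(k)}=0$ for every $r=1,\ldots,s$. Set $\alpha:=\sum_j c_j\alpha_j\in Q_k$. Since $\{\alpha_j\}$ lifts a basis of $A_k$, $\alpha$ represents a nonzero class in $A_k$, i.e.\ $\alpha\notin I_k$. On the other hand, the vanishing of the row combination translates into $\alpha(P_r)=0$ for all $r$, so $\alpha\in (I_{\Gamma})_k$; by apolarity $(I_{\Gamma})_k\subset I_k$, giving $\alpha\in I_k$, a contradiction.

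The main subtlety, as opposed to a real obstacle, is keeping straight the passage between the differential operator $\alpha\in Q_k$ (evaluated on powers $l_r^d$) and the polynomial $\alpha$ (evaluated at the coefficient vectors $P_r$); the identity $\alpha(l_r^d)=\tfrac{d!}{(d-k)!}l_r^{d-k}\alpha(P_r)$ used implicitly above makes this translation rigorous, and once it is in place both assertions of the proposition follow without further computation.
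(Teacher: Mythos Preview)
Your proof is correct and follows essentially the same route as the paper: identify the $r$-th column of $W_k$ with the evaluation of the chosen monomial basis of $A_k$ at $P_r=l_r^{\perp}$, and then use the Apolarity Lemma to derive a contradiction from a nontrivial row relation (equivalently, a hyperplane through $\Gamma'$) by producing a degree-$k$ form $\alpha\in I_\Gamma\subset I$ that represents a nonzero class in $A_k$. Your version is slightly more explicit in that you first verify $s\ge a_k$ so that ``maximal rank'' unambiguously means rank $a_k$, but the underlying argument is the same.
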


\begin{proof}
 Note that $A_k$ has a monomial basis, let $\alpha=X_0^{c_0}x_1^{C_1}...X_N^{c_{N}}\in A_k$, $c_0+...+c_n=k$, and denote $l_{r}=(a_{1r}x_1+...+a_{nr}x_n)$, we have:

$$\alpha_{i_{1}...i_{k}}(l_{r}^{d})=\frac{d!}{(d-k)!}l_{r}^{d-k}a_{i_1}^{c_{i_{1}}}...a_{i_{k}}^{c_{ik}}.$$

Hence all the entries of the $rth$ column of $W$ are of the form
$w_{i_{1}...i_{r}}=a_{i_1}^{c_{i_{1}}}...a_{i_{k}}^{c_{i_k}}$. \\

The maximality of the rank follows from the Apolarity Lemma \ref{lemma:apolarity}. In fact, if the rank of $W_k$ drops, then, the image of $\Gamma$ by the relative Veronese, $\Gamma' \subset \P^{a_k-1}$, should satisfy $<\Gamma'> \subset H \subset \P^{a_k-1}$. It means that there is degree $k$ form $\alpha \in A_k $ in the ideal of the points $P_i$, but $I_{\Gamma} \subset \Ann_f=I$. The result follows. 

\end{proof}

\begin{prop}\label{thm:consequencesofWDWt}
 Consider the decomposition of the Hessian matrix: 
  $$\Hess_f^{(d-l,k)} = \frac{d!}{(l-k)!} [W_{d-l}]_{m_l\times s}[D_{k,l}]_{s \times s}[W_k]_{s \times m_k}^t.$$
  Assuming that $s\geq m_l \geq m_k$ we get:
 \begin{enumerate}
 \item If $s=m_k=m_l$, then $\text{det}(\Hess_f^{(d-l,k)}) \neq 0$.
     \item If $s>m_l$, then
  $$\operatorname{dim}\left(\operatorname{Im}(DW_k^t)\cap\operatorname{Ker}(W_{d-l})\right)=\operatorname{dim}\left(DW_k^t\left(\operatorname{Ker}\left(\Hess^{(d-l,k)}_f\right)\right)\right)=\operatorname{dim}\left(\operatorname{Ker}\left(\Hess^{(d-l,k)}_f\right)\right).$$
 Moreover, the following conditions are equivalent: 
\begin{enumerate}
 \item $\rank(\Hess_f^{(d-l,k)})$ is maximal;
 \item $ \operatorname{dim}\left(\operatorname{Im}(DW_k^t)\cap \operatorname{Ker}(W_{d-l})\right)= m_l-m_k$;
 \item $\operatorname{dim}\left(\operatorname{Im}(W_k^t)\cap \operatorname{Ker}(W_{d-l}D)\right)= m_l-m_k $.
\end{enumerate}

  \end{enumerate}

\end{prop}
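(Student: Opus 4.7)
The plan is to work directly with the factorization supplied by the preceding Lemma,
\[\Hess_f^{(d-l,k)} \;=\; \frac{d!}{(l-k)!}\,W_{d-l}\,D_{k,l}\,W_k^t,\]
which I abbreviate as $\Hess = c\,MDN$ with $M = W_{d-l}$, $N = W_k^t$, $D = D_{k,l}$. Proposition~\ref{P1} (applied in degrees $k$ and $d-l$, using $\dim A_{d-l}=\dim A_l = m_l$ by Poincar\'e duality) gives that $W_k$ and $W_{d-l}$ have maximal ranks $m_k$ and $m_l$, so as linear maps $N:\C^{m_k}\to\C^s$ is injective and $M:\C^s\to\C^{m_l}$ is surjective. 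Moreover $D$ is invertible because each diagonal entry $l_r^{l-k}$ is a nonzero form. All subsequent steps are mechanical consequences of these three facts together with $\Hess = c\,MDN$.

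For Part~(1), when $s=m_k=m_l$ every factor becomes an $s\times s$ square matrix. Multiplicativity of the determinant yields
\[\det \Hess_f^{(d-l,k)} \;=\; c^{\,s}\,\det(W_{d-l})\,\det(D)\,\det(W_k^t),\]
and each factor on the right is nonzero: the two outer Wronskian-type determinants by Proposition~\ref{P1}, and $\det D=\prod_r l_r^{l-k}$ as a polynomial since the $l_r$ are nonzero and $l>k$.

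For Part~(2) I would first establish the chain of dimension equalities at the top. The composition $DN$ is injective (as $N$ is injective and $D$ invertible), so the assignment $v\mapsto DNv$ embeds $\ker\Hess_f^{(d-l,k)}$ linearly into $\C^s$. The defining condition $M(DNv)=0$ places the image inside $\ker M$, while $DNv\in\Im(DN)$ automatically; conversely any $w\in\Im(DN)\cap\ker M$ is uniquely $w=DNv$ with $v\in\ker\Hess_f^{(d-l,k)}$. Hence $DN$ realises a linear isomorphism $\ker\Hess_f^{(d-l,k)}\xrightarrow{\sim}\Im(DW_k^t)\cap\ker W_{d-l}$, which gives the asserted equalities of dimensions.

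For the equivalences of (a), (b), (c): since $\Hess_f^{(d-l,k)}$ has format $m_l\times m_k$ with $m_k\le m_l$, ``maximal rank'' means $\rank=m_k$, equivalently $\ker\Hess_f^{(d-l,k)}=0$; the kernel identity translates this into the stated condition on $\dim(\Im(DW_k^t)\cap\ker W_{d-l})$, giving (a)$\Leftrightarrow$(b). For (b)$\Leftrightarrow$(c), invertibility of $D$ makes $v\mapsto Dv$ a dimension-preserving bijection
\[\Im(W_k^t)\cap\ker(W_{d-l}D)\;\xrightarrow{\sim}\;\Im(DW_k^t)\cap\ker W_{d-l},\]
since $v\in\Im(W_k^t)$ iff $Dv\in\Im(DW_k^t)$, and $W_{d-l}Dv=0$ iff $Dv\in\ker W_{d-l}$. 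No serious obstacle is anticipated: the only subtlety is bookkeeping among $s,m_k,m_l$ and tracking which side of the Hessian carries the kernel; all real content is concentrated in Proposition~\ref{P1} and the invertibility of $D$.
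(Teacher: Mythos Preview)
Your approach is essentially identical to the paper's: both invoke Proposition~\ref{P1} for the maximal rank of $W_k$ and $W_{d-l}$, use the invertibility of $D$, deduce Part~(1) by multiplicativity of the determinant, and obtain Part~(2) from the fact that $DW_k^t$ embeds $\Ker(\Hess_f^{(d-l,k)})$ isomorphically onto $\Im(DW_k^t)\cap\Ker(W_{d-l})$, with (b)$\Leftrightarrow$(c) following from $D$ being an isomorphism. One small wrinkle: you (correctly, for an $m_l\times m_k$ matrix with $m_k\le m_l$) say maximal rank means $\Ker=0$, whereas condition~(b) as stated asks for the intersection to have dimension $m_l-m_k$; the paper's own proof instead asserts that maximal rank is equivalent to $\dim\Ker=m_l-m_k$, so your write-up exposes rather than resolves what appears to be a typo in the statement, and your sentence ``the kernel identity translates this into the stated condition'' does not literally hold unless $m_l=m_k$.
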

\begin{proof}
Consider the decomposition of $\Hess_f^{(d-l,k)} = \frac{d!}{(l-k)!} [W_{d-l}]_{m_l\times s}[D_{k,l}]_{s \times s}[W_k]_{s \times m_k}^t.$ in a diagram of $\L$ vector spaces, with $\L=\C(x)$. Recall that $D$ is an isomorphism.

$$\begin{array}{ccccc}
 & & \Hess_f^{(d-l,k)} & & \\
 &  \L^{m_k} & \longrightarrow & \L^{m_l} & \\
W_{k}^t &  \downarrow & & \uparrow & W_{d-l} \\
 &   \L^s & \longrightarrow & \L^s & \\
 & & D & & 
   
  \end{array}
$$
\begin{enumerate}
\item If $s=m_k=m_l$, $\Hess_f^{(d-l,k)}$ is a square matrix. Since $\det(D)\neq0$, the result follows immediately from the decomposition formula.

 \item It is easy to check that $\operatorname{Im}(DW_k^t)\cap\operatorname{Ker}(W_{d-l})=DW_k^t\left(\operatorname{Ker}\left(\Hess^{(d-l,k)}_f\right)\right)$. Since $W_l^t$ and $D$ are injective, we have
 $$\operatorname{dim}\left(\operatorname{Im}(DW_k^t)\cap\operatorname{Ker}(W_{d-l})\right)=\operatorname{dim}\left(DW_k^t\left(\operatorname{Ker}\left(\Hess^{(d-l,k)}_f\right)\right)\right)=\operatorname{dim}\left(\operatorname{Ker}\left(\Hess^{(d-l,k)}_f\right)\right)$$
 $\Hess^{(d-l,k)}_f$ has maximal rank if and only if $\operatorname{dim}(\operatorname{Ker}\left(\Hess^{(d-l,k)}_f\right)=m_{l}-m_{k}$. Now we get $(a)\Leftrightarrow (b) \Leftrightarrow (c)$ .
\end{enumerate}
 \end{proof}

 \begin{cor} \label{cor:easy} Let $f \in R_d$ be a form and let $A$ be the associated AG algebra. Suppose that $f$ has a power sum decomposition  
  with $s=a_k$ for some $k \leq d/2$. Then \[\hess^k_f \neq 0.\]
  In particular, if $d=2q+1$ and $k=q$, then $A$ has the WLP. 
 \end{cor}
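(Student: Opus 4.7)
The plan is to derive the result directly from part (1) of Proposition~\ref{thm:consequencesofWDWt}, specialized to the non-mixed Hessian by taking $l = d-k$ in the notation of that proposition. Under this substitution, $\Hess_f^{(d-l,k)}$ becomes $\Hess_f^{(k,k)} = \Hess_f^k$, the basis $\{\beta_1,\ldots,\beta_{m_l}\}$ of $A_{d-l}$ becomes a basis of $A_k$, so $m_l = a_k = m_k$ by Poincaré duality, and the hypothesis $s=m_k=m_l$ of part~(1) reduces exactly to the standing assumption $s = a_k$. Hence Proposition~\ref{thm:consequencesofWDWt}(1) immediately yields $\hess_f^k = \det(\Hess_f^k) \neq 0$ as a polynomial in $x_0,\ldots,x_n$.

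For transparency I would also unpack this using the factorization of Proposition~\ref{prop:decompositionofhessian}(2):
$$\Hess_f^k = \frac{d!}{(d-2k)!}\, W_k\, D_{k,d-k}\, W_k^t,$$
a product of three square matrices of size $s = a_k$. By Proposition~\ref{P1}, $W_k$ has maximal rank $a_k$, hence $\det(W_k) \in \C^{*}$; the diagonal factor $D_{k,d-k}$ has entries $l_r^{d-2k}$, each a nonzero polynomial (either a positive power of a nonzero linear form, or a nonzero constant when $d = 2k$), so $\det(D_{k,d-k}) = \prod_r l_r^{d-2k}$ is a nonzero polynomial in $x$. Taking determinants expresses $\hess_f^k$ as a nonzero scalar times $\det(W_k)^2 \det(D_{k,d-k})$, which is not identically zero.

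For the ``in particular'' statement, take $d = 2q+1$ and $k=q$: then $k \leq d/2$, so the first part yields $\hess_f^q \not\equiv 0$. Since $\C$ is infinite, some point $(a_0,\ldots,a_n) \in \C^{n+1}$ satisfies $\hess_f^q(a_0,\ldots,a_n) \neq 0$, and by the Weak Lefschetz Hessian criterion (Theorem~\ref{thm:hessiancriteria}(2), odd socle degree case), $L = a_0 x_0 + \cdots + a_n x_n$ is a weak Lefschetz element of $A$, so $A$ has the WLP. There is no substantive obstacle in this proof: once the index translation $l = d-k$ is carried out via Poincaré duality $a_{d-k} = a_k$ in the AG algebra $A$, everything follows mechanically from the previously established factorization formula and the maximal rank of $W_k$.
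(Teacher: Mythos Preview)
Your proof is correct and follows exactly the paper's approach: the paper's own proof is a one-line invocation of Proposition~\ref{thm:consequencesofWDWt} and the Hessian criterion Theorem~\ref{thm:hessiancriteria}, and you have simply unpacked those references (making explicit the specialization $l=d-k$, the identification $m_k=m_l=a_k$ via Poincar\'e duality, and the role of Proposition~\ref{P1} in the invertibility of $W_k$). There is nothing to add.
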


 \begin{proof}
  If follows from Proposition \ref{thm:consequencesofWDWt} and from the Hessian criteria Theorem \ref{thm:hessiancriteria}.
 \end{proof}

\begin{cor}\label{cor:wrk_br}
 Let $f \in R_d $ be a concise homogeneous form and $A=A(f)$ be the associated algebra. If $\underline{rk}(f)=\dim A_k$ and $\hess_f^k = 0$, then 
 \[wrk(f)>\underline{rk}(f).\]
 In particular, all concise forms of minimal border rank and vanishing Hessian have rank greater then its border rank. 
\end{cor}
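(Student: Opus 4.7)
The plan is a one-line proof by contradiction, directly invoking Corollary \ref{cor:easy}. First I would note the standing inequality $wrk(f) \geq \underline{rk}(f) = \dim A_k =: a_k$, which is immediate from the definitions of the two ranks. The goal is to rule out equality.

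Suppose, towards a contradiction, that $wrk(f) = \underline{rk}(f) = a_k$. Then $f$ admits a power sum decomposition $f = l_1^d + \cdots + l_s^d$ with exactly $s = a_k$ summands. This is precisely the hypothesis of Corollary \ref{cor:easy} (with the same $k$, which we may assume satisfies $k \leq d/2$, since otherwise $\hess_f^k$ is interpreted via Poincaré duality $\dim A_k = \dim A_{d-k}$ in the AG algebra $A$, allowing us to replace $k$ by $d-k$). Applying the corollary yields $\hess_f^k \neq 0$, contradicting the hypothesis $\hess_f^k = 0$. Therefore the inequality is strict: $wrk(f) > \underline{rk}(f)$.

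For the ``in particular'' assertion, recall that for a concise form in $n+1$ variables one has $\dim A_1 = n+1$, and ``minimal border rank'' means exactly $\underline{rk}(f) = \dim A_1$. The ordinary Hessian is $\hess_f = \hess_f^1$. Thus the case $k=1$ of the first part of the corollary applies and gives the conclusion.

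There is no real obstacle here; the corollary is an immediate consequence of Corollary \ref{cor:easy} together with the definitional inequality $wrk(f) \geq \underline{rk}(f)$. The only thing worth flagging is that the whole machinery is routed through part (1) of Proposition \ref{thm:consequencesofWDWt}: when $s = a_k$ and we set $l = d-k$, the matrices $W_k$ and $W_{d-l} = W_k$ are square and of maximal rank by Proposition \ref{P1}, so the factorization $\Hess_f^k = \frac{d!}{(d-2k)!} W_k D_{k,d-k} W_k^t$ has nonzero determinant, giving the nonvanishing of $\hess_f^k$ that produces the contradiction.
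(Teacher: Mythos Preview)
Your proof is correct and follows essentially the same route as the paper: assume $\wrk(f)=\underline{rk}(f)=a_k$, obtain a power sum decomposition with $s=a_k$ terms, and invoke Corollary~\ref{cor:easy} to force $\hess_f^k\neq 0$, a contradiction. Your version is in fact a bit cleaner than the paper's, which inserts an unnecessary limit argument (passing through a one-parameter family $l_i(t)$) before reaching the same appeal to Corollary~\ref{cor:easy}; since $\wrk(f)=a_k$ already furnishes the needed decomposition directly, that detour is superfluous.
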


\begin{proof}
 We get that $wrk(f)\geq \underline{rk}(f)$. If equality holds true, let $r=wrk(f)$. We get a limit $\displaystyle \lim_{t\to 0} l_i(t)=l_i\in A_1$ satisfying 
 \[f= \displaystyle \lim_{t\to 0} \sum_{i=1}^r l_i(t)^d= \displaystyle \sum_{i=1}^r \lim_{t\to 0} l_i(t)^d=\displaystyle \sum_{i=1}^r l_i^d\]
 On the other hand, if $wrk(f)=r$, then, by Corollary \ref{cor:easy}, $\hess_f^k \neq 0$. 
\end{proof}

\section{Wild forms}

The definition of a wild form can be found in \cite{BB,HMV}. 

\begin{defin}\label{def:wild}
 We say that a form $f \in R_d$ is wild if \[\underline{rk}(f) < sr(f) .\] 
\end{defin}

Since  $sr(f) \geq cr(f)$ and since we are not interested in the smoothable rank we produce wild forms showing that $\underline{rk}(f) < cr(f)$. Our strategy is to find an upper bound to $\underline{rk}(f)$ which is also a lower bound to $cr(f)$. That is, a positive integer $a$ such that
\[\underline{rk}(f) \leq a < cr(f).\]

The next result is a generalization of \cite[Proposition 2.6]{BB}.

\begin{prop}\label{prop:border}
 Let $X\subset \P^N$ be a projective variety of dimension $\dim X=n$ and let $x_1,\ldots,x_r \in X$ smooth points. Suppose that $\dim <x_1,\ldots,x_r> \leq r-1$. Then
 \[<T^k_{x_1}X, \ldots, T^k_{x_r}X> \subset S^rT^{k-1}X \subset S^{kr}X.\]
\end{prop}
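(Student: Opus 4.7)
The plan is to prove the two inclusions separately.

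For the second inclusion $S^r T^{k-1}X \subset S^{kr}X$, I would invoke the classical fact that, at any smooth point $y \in X$, the $(k-1)$-th osculating space $T^{k-1}_y X$ is realized as a flat limit of the linear span of $k$ smooth points of $X$ coalescing to $y$; hence $T^{k-1}_y X \subset S^k X$. Taking the union over $y \in X$ yields $T^{k-1}X \subset S^k X$, and the standard join inequality $S^r(S^k X) \subset S^{kr}X$ (joining $r$ configurations of $k$ points of $X$ gives a point of $S^{kr}X$) then delivers $S^r T^{k-1}X \subset S^{kr}X$.

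For the first inclusion $\langle T^k_{x_1}X, \ldots, T^k_{x_r}X\rangle \subset S^r T^{k-1}X$, the strategy is to degenerate each osculating $k$-space into a $1$-parameter family of $(k-1)$-osculating spaces at nearby points. Concretely, for each $i$ choose a smooth arc $y_i(t) \in X$ with $y_i(0) = x_i$ and consider the family $T^{k-1}_{y_i(t)}X$. The key geometric input is that any $p_i \in T^k_{x_i}X$ admits a presentation $p_i = \lim_{t \to 0} p_i(t)$ with $p_i(t) \in T^{k-1}_{y_i(t)}X$, because, as $y_i(t)$ moves infinitesimally along $X$, the osculating subspace $T^{k-1}_{y_i(t)}X$ tilts inside the Grassmannian and its first-order variation supplies exactly the $k$-th order directions missing from $T^{k-1}_{x_i}X$. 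Summing over $i$, any $p = \sum_i p_i \in \langle T^k_{x_1}X, \ldots, T^k_{x_r}X\rangle$ is the limit of $\sum_i p_i(t)$, where each finite-$t$ sum lies in $\langle T^{k-1}_{y_1(t)}X, \ldots, T^{k-1}_{y_r(t)}X\rangle \subset S^r T^{k-1}X$. Since $S^r T^{k-1}X$ is Zariski closed (it is defined as the closure of the join), the limit point $p$ also lies in $S^r T^{k-1}X$, as desired.

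The main obstacle is the degeneration step in the first inclusion: one must construct, for any preassigned $p_i \in T^k_{x_i}X$, a one-parameter family $p_i(t) \in T^{k-1}_{y_i(t)}X$ with $p_i(t) \to p_i$, and do so coherently across the $r$ indices so that the partial sums $\sum_i p_i(t)$ remain in the join of $r$ osculating $(k-1)$-spaces. The hypothesis $\dim\langle x_1, \ldots, x_r\rangle \leq r-1$ keeps the expected dimensions on the two sides aligned and rules out pathological coincidences among the base points that could otherwise cause the moving osculating spaces to collapse and fail to sweep out the full left-hand join.
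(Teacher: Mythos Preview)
Your treatment of the second inclusion $S^r T^{k-1}X \subset S^{kr}X$ matches the paper's and is fine.

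The first inclusion, however, has a genuine gap. Your key step claims that for any $p_i \in T^k_{x_i}X$ one can find $p_i(t) \in T^{k-1}_{y_i(t)}X$ with $p_i(t) \to p_i$. This is false in general: as $t\to 0$ the linear spaces $T^{k-1}_{y_i(t)}X$ converge in the Grassmannian to $T^{k-1}_{x_i}X$, so any convergent family $p_i(t)\in T^{k-1}_{y_i(t)}X$ has its limit in $T^{k-1}_{x_i}X$, not in the strictly larger $T^k_{x_i}X$. Your heuristic that ``the first-order variation supplies the missing $k$-th order directions'' is a statement about tangent vectors to the family, not about limit points of the family; it does not yield the pointwise approximation you need. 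Consequently the termwise argument $p=\sum_i p_i=\lim \sum_i p_i(t)$ does not go through.

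You have also misread the hypothesis $\dim\langle x_1,\ldots,x_r\rangle \le r-1$. It is not a genericity condition preventing collapse; it is a \emph{dependence} condition, and that is precisely what the paper exploits. The paper lifts the $x_i$ to affine representatives $a_i\in\C^{N+1}$ and uses the linear dependence to arrange $a_1+\cdots+a_r=0$. For each $i$ it then takes a curve $\alpha_i(t)$ in the affine cone over $T^{k-1}X$ with $\alpha_i(0)=a_i$ and $\alpha_i'(0)=v_i$ (possible because $T^k_{x_i}X$ sits in the tangent cone of $T^{k-1}X$ at $x_i$). Individually $[\alpha_i(t)]\to x_i$, not to $[v_i]$; but the sum satisfies
\[
\sum_i \alpha_i(t)=\sum_i a_i + t\sum_i v_i + O(t^2)= t\sum_i v_i + O(t^2),
\]
so projectively $[\sum_i \alpha_i(t)]\to[\sum_i v_i]=[v]$, while for each $t\neq 0$ the point $[\sum_i \alpha_i(t)]$ lies in $S^r T^{k-1}X$. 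The cancellation $\sum a_i=0$ is what lets the first-order terms surface in the projective limit; without the dependence hypothesis this trick is unavailable and, indeed, the inclusion can fail (already for $r=1$ one does not have $T^k_x X\subset T^{k-1}X$ in general).
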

\begin{proof} Since $T^{k-1}X \subset S^k X$, we have $S^r(T^{k-1}X) \subset S^r(S^k X) \subset S^{rk}(X)$. 
Take points $a_1$,...$a_r \in \mathbb{C}^{N+1}$ such that $[a_i]=x_i$, for $i=1,...,r$. Since $\dim <x_1,\ldots,x_r> \leq r-1$, we can suppose that $a_1+\ldots+a_r=0$. Let $v$ an arbitrary point of $<T^{k}_{x_1}X, \ldots, T^{k}_{x_r}X>$. We can write $v=v_{1}+...+v_{r}$ with $v_{i}\in T^{k}_{x_i}X$. Let $\alpha_i(t) \subset T^{k-1}X$ be a curve such that $\alpha_i(0)=x_i$ and $\alpha'_i(o)=v_i$. It is possible since the vectors of $T^{k}_{x_i}X$ belongs to the tangent cone of $T^{k-1}X$ in $x_i$. \\
Define the curve $\alpha(t)=\frac{1}{t}\sum_{i=1}^{r}\alpha_i(t)$. Note that $[\alpha(t)]\in S^r(T^{k-1}(X))$. Therefore:
\begin{align*}
S^r(T^k(X))\ni [\alpha(0)]&=\left[\displaystyle \lim_{t\rightarrow 0}\frac{1}{t}\sum_{i=1}^{r}(\alpha_i(t)-a_i)\right]\\
 =& \left[\sum_{i=1}^{r}\displaystyle \lim_{t\rightarrow 0}\frac{\alpha_i(t)-\alpha_i(0)}{t}\right]=\left[\sum_{i=1}^rv_{i}\right]=[v].\\
\end{align*}
\end{proof}

The next result is a generalization of \cite[Lemma 5.1]{HMV}.

\begin{cor}
 Let $f \in \C[x_1, \ldots,x_n, u, \ldots, v]_{(k,d-k)}$ be a bi-homogeneous form of bi-degree $(k,d-k)$ with $1\leq k \leq d-k$. 
 The border rank of $f$ satisfies: \[\underline{rk}(f)\leq k(d+2).\]
\end{cor}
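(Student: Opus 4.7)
The plan is to exhibit $f$ as a sum of exactly $r=d+2$ terms, each lying in the $k$-th osculating space of the Veronese $\mathcal{V}_{d}(\P^{n+1})$ at a point of a rational normal curve, and then to invoke Proposition \ref{prop:border} to bound the border rank by $rk = k(d+2)$.

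First I would expand $f$ along the $x$-monomials of degree $k$,
\[
f \;=\; \sum_{|\alpha|=k} x^{\alpha}\, g_{\alpha}(u,v), \qquad g_{\alpha}\in \C[u,v]_{d-k},
\]
and use that $\C[u,v]_{d-k}$ has dimension only $d-k+1$. Fix $r=d+2$ pairwise distinct linear forms $L_{1},\ldots,L_{r}\in \C[u,v]_{1}$; since any $d-k+1$ powers of pairwise distinct binary linear forms are linearly independent, the powers $L_{1}^{d-k},\ldots,L_{r}^{d-k}$ already span all of $\C[u,v]_{d-k}$. Hence each $g_{\alpha}$ admits an expression $g_{\alpha}=\sum_{i=1}^{r}\lambda_{i,\alpha}\,L_{i}^{d-k}$, and setting $H_{i}(x):=\sum_{\alpha}\lambda_{i,\alpha}x^{\alpha}\in \C[x_{1},\ldots,x_{n}]_{k}$ I obtain
\[
f \;=\; \sum_{i=1}^{d+2} L_{i}^{d-k}\,H_{i}(x).
\]

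Next I would recall that the $k$-th osculating space to $\mathcal{V}_{d}$ at $[L^{d}]$ equals $\P\{L^{d-k}Q\colon Q\in R_{k}\}$, so each summand $L_{i}^{d-k}H_{i}(x)$ lies in $T^{k}_{[L_{i}^{d}]}\mathcal{V}_{d}$, giving
\[
[f]\;\in\;\bigl\langle T^{k}_{[L_{1}^{d}]}\mathcal{V}_{d},\ldots,T^{k}_{[L_{r}^{d}]}\mathcal{V}_{d}\bigr\rangle.
\]
Because all $L_{i}$ lie on the line $\P(\C[u,v]_{1})\subset\P^{n+1}$, the points $[L_{i}^{d}]$ all lie on the rational normal curve $\mathcal{V}_{d}(\P(\C[u,v]_{1}))$, whose linear span is only $\P(\C[u,v]_{d})\cong\P^{d}$. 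Hence $\dim\langle [L_{1}^{d}],\ldots,[L_{r}^{d}]\rangle\leq d = r-2 \leq r-1$, the hypothesis of Proposition \ref{prop:border} is satisfied, and the proposition yields $[f]\in S^{rk}(\mathcal{V}_{d}(\P^{n+1}))$, i.e.\ $\underline{rk}(f)\leq k(d+2)$.

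The delicate point is the precise value of $r$. A naive simultaneous Waring decomposition of the coefficients $g_{\alpha}$ only needs $d-k+1$ summands, but Proposition \ref{prop:border} additionally requires a linear relation $\sum_{i}a_{i}=0$ among lifts of the chosen points; by the Vandermonde argument any $d+1$ distinct points on a degree-$d$ rational normal curve are linearly independent, so one cannot hope for fewer than $d+2$ points on the curve. Fortunately the unique linear dependence among $d+2$ such points has all coefficients nonzero, so the lifts can be rescaled to satisfy $\sum_{i}a_{i}=0$ as required. This explains why the threshold of Proposition \ref{prop:border} forces the bound to be $k(d+2)$ and not the smaller $k(d-k+1)$ coming from the apolarity of binary forms.
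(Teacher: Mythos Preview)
Your proof is correct and follows essentially the same route as the paper: place $d+2$ distinct points on the rational normal curve $\mathcal{V}_d(\P(\C[u,v]_1))$, express $f$ as a sum of terms $L_i^{d-k}H_i(x)$ lying in the $k$-th osculating spaces of the ambient Veronese at these points, observe that $d+2$ points on a curve spanning only $\P^d$ are linearly dependent, and invoke Proposition~\ref{prop:border}. Your write-up is in fact more precise than the paper's (which writes ``$f=\sum f_i(\underline{x})l_i^{d}$'' where $l_i^{d-k}$ is clearly intended), and your final paragraph explaining why one cannot get by with fewer than $d+2$ points is a welcome clarification.
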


\begin{proof}
 Since $\dim \C[u,v]_d = d+1$, let $l_0^d, \ldots,l_d^d \in \C[u,v]_d$ be a basis. It is easy to see that $f = \displaystyle \sum_{i=0}^d f_i(\underline{x})l_i^d$. Let $l_{d+1} \in \C[u,v]$ be an arbitrary linear form. The points $x_0=[l_0^d],\ldots, x_d=[l_d^d], x_{d+1}=[l_{d+1}^d] \in \mathcal{V}(d,\P^1)=X$ are linearly dependent, that is, $\dim <x_0,\ldots,x_{d+1}> \leq d+1$. Therefore, by Proposition \ref{prop:border}, 
 $<T^k_{x_0}X, \ldots, T^k_{x_{d+1}}X>  \subset S^{k(d+2)}X$.
 Since $[f] \in <T^k_{x_0}X, \ldots, T^k_{x_{d+1}}X> \subset S^{k(d+2)}X$, the result follows.
\end{proof}

\subsection{$k$-concise wild forms with vanishing Hessian}

\begin{defin}\rm 
 A form $f \in R_d$ is called $k$-concise, with $d \geq 2k+1$, if  $I_j=0$ for $j=1,2,\ldots,k$. It is equivalent to $a_j=\binom{n+j}{j}$ for $j=0,\ldots,k$. As usual, $1$-concise forms are called concise. 
\end{defin}

The following Lemma is a generalization, for higher Hessians of an idea contained in proof of \cite{HMV}[Theorem 3.5] for the case of classical Hessians.

\begin{lema}\label{lema:hess}
 Let $f \in R_d$ be a concise form and $A=A(f)= Q/I$ be the associated algebra. Suppose that $a_k\leq a_{d-s}$ and $k+s\leq d$. If $ \Hess_f^{(k,s)}$ is degenerated, then 
 exists $\alpha \in I_k^{sat}\setminus I_k$. 
\end{lema}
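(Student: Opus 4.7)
The plan is to extract a nonzero vector from the kernel of the degenerate mixed Hessian and identify the corresponding element of $Q_k$ as the required element of $I^{sat}_k\setminus I_k$. First, Poincaré duality for the Artinian Gorenstein algebra $A$ gives $a_s=a_{d-s}$, so the hypothesis $a_k\leq a_{d-s}$ reads $a_k\leq a_s$. The mixed Hessian $\Hess_f^{(k,s)}$ is an $a_k\times a_s$ matrix whose $(i,j)$-entry is $\alpha_i\beta_j(f)\in R_{d-k-s}$ (so the hypothesis $k+s\leq d$ keeps this degree meaningful). Degeneracy means the rank is strictly less than $a_k$, so there exist scalars $c_1,\ldots,c_{a_k}\in\C$, not all zero, satisfying
\[
\sum_{i=1}^{a_k} c_i\,\alpha_i\beta_j(f)=0\qquad\text{for every } j=1,\ldots,a_s.
\]

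Set $\alpha:=\sum_i c_i\alpha_i\in Q_k$. By $\C$-linearity the identity above says $\alpha\beta(f)=0$ for every $\beta$ in the basis $\ba_s$ of $A_s$, hence for every $\beta\in A_s$. Any lift to $Q_s$ differs from a basis representative by an element of $I_s$, which already annihilates $f$; therefore $\alpha\cdot Q_s\subset\Ann(f)=I$. Denoting by $\mathfrak{m}=(X_0,\ldots,X_n)$ the irrelevant ideal and using that $Q$ is standard graded, we have $\mathfrak{m}^s=\bigoplus_{j\geq s}Q_j$, and for every $j\geq s$
\[
\alpha\cdot Q_j=\alpha\cdot Q_s\cdot Q_{j-s}\subset I\cdot Q_{j-s}\subset I.
\]
Consequently $\alpha\,\mathfrak{m}^s\subset I$, which is exactly the statement $\alpha\in(I:\mathfrak{m}^s)\subset I^{sat}$.

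It remains to verify $\alpha\notin I_k$. By construction, the $\alpha_i$ descend to a $\C$-basis of $A_k=Q_k/I_k$, so any nontrivial combination of them represents a nonzero class in $A_k$; since the $c_i$ are not all zero, $\alpha+I_k\neq 0$, i.e., $\alpha\notin I_k$. Combined with the previous step this yields $\alpha\in I^{sat}_k\setminus I_k$, as required. The argument is essentially a dictionary between linear algebra (left kernel of a rectangular matrix) and commutative algebra (saturation at the irrelevant maximal ideal); I do not anticipate any real obstacle, the only care being to track degrees correctly, which is what the inequalities $a_k\leq a_{d-s}$ and $k+s\leq d$ are there to guarantee.
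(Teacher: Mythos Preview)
There is a genuine gap at the very first step. The entries of $\Hess_f^{(k,s)}$ are forms in $R_{d-k-s}$, so ``degenerate'' means the rank over the function field $\C(x_0,\ldots,x_n)$ is strictly less than $a_k$. This yields a nonzero left-kernel vector with \emph{polynomial} (equivalently, rational) entries, not with scalar entries; your claim that one may take $c_1,\ldots,c_{a_k}\in\C$ is unjustified. Worse, such constants cannot exist: if $\alpha=\sum_i c_i\alpha_i\in Q_k\setminus I_k$ satisfied $\alpha\beta_j(f)=0$ for every $j$, then $\alpha\cdot A_s=0$ in $A$; since $A$ is standard graded and $s\le d-k$, one has $A_{d-k}=A_s\cdot A_{d-k-s}$, hence $\alpha\cdot A_{d-k}=0$, and Poincar\'e duality forces $\alpha=0$ in $A_k$, contradicting $\alpha\notin I_k$. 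Concretely, for the Perazzo cubic $f=xu^2+yuv+zv^2$ one has $\hess_f=0$, yet a direct check shows the $5\times5$ Hessian admits no nonzero $\C$-linear row relation.

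The paper's argument avoids this trap by not seeking a scalar row relation. It goes through the Hessian criterion: for each $L\in A_1$ the \emph{evaluation} $\Hess_f^{(k,s)}(L^{\perp})$ is the matrix of $\bullet L^{d-k-s}:A_k\to A_{d-s}$, and degeneracy of the polynomial matrix supplies a polynomial (``universal'') kernel vector which, specialised at each $L$, produces an element of $A_k$ in the kernel of that multiplication map; multiplying by $L^{d-k-s}$ then lands in $I_{d-s}$. That passage through evaluations is the idea your attempt is missing; the direct ``constant left-kernel vector'' shortcut is blocked by Gorenstein duality.
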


\begin{proof}
 We are considering $ \Hess_f^{(k,s)}$ as a matrix in $R$. By the Hessian criteria \ref{thm:hessiancriteria}, for each $L \in A_1$, the map 
 $\bullet L^{d-s-k}:A_k \to A_{d-s} $ is represented by $\Hess_f^{(k,s)}(L^{\perp})$. Therefore, there is a universal polynomial in the kernel of $\Hess_f^{(k,s)}$ such that its image $\alpha \in A_k $ belongs the kernel of $\bullet L^{d-s-k}$ for every $L \in A_1$, that is $L^{d-s-k}\alpha \in I_{d-s}$. In particular, $X_i^{d-k-s}\alpha \in I_{d-s}$ for $i=0,\ldots,n$, that is, $\alpha \in I^{sat}_k\setminus I_k$.   
\end{proof}

\begin{lema}\label{lema:k_concise}
Let $f \in R_d$ be a $k-$concise form with $2k<d$ and let $I=\Ann(f) \subset Q$. Let $J =(I_{d-k})\subset Q$ be the ideal generated by the degree $d-k$ part of $I$. If $J_{l}^{sat}\neq \emptyset$ for some $l \leq k$, then \[\text{cr}(f)> a_k=\binom{n+k}{k}.\]
\end{lema}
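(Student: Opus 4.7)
The plan is to argue by contradiction, assuming $\text{cr}(f) \leq a_k$. By the schematic version of apolarity (Lemma~\ref{lemma:apolarity} extended to arbitrary $0$-dimensional subschemes), there exists $K \subset \P^n$ of length $\ell(K) \leq a_k$ such that $I_K \subset I = \Ann(f)$.

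The first step is to pin down the Hilbert function of $K$. Since $f$ is $k$-concise, $I_j = 0$ for $j \leq k$, so $(I_K)_k = 0$ and $H_K(k) = \binom{n+k}{k} = a_k$. Because $H_K$ is nondecreasing and bounded above by $\ell(K) \leq a_k$, we conclude that $\ell(K) = a_k$ and $H_K(j) = a_k$ for every $j \geq k$. The standard Castelnuovo--Mumford regularity bound for $0$-dimensional subschemes then gives $\reg(I_K) \leq k+1$, and since $d > 2k$ we obtain $\reg(I_K) \leq d-k$.

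Next I would invoke Poincar\'e duality in the AG algebra $A$: the equality $a_{d-k} = a_k$ yields
\[
\dim I_{d-k} = \binom{n+d-k}{d-k} - a_k = \dim (I_K)_{d-k}.
\]
Combined with $(I_K)_{d-k} \subset I_{d-k}$, this forces the equality $(I_K)_{d-k} = I_{d-k}$, hence $J = \bigl((I_K)_{d-k}\bigr) \subset I_K$.

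The key technical claim is $J^{sat} = I_K$. The inclusion $J \subset I_K$ and the saturatedness of $I_K$ give $J^{sat} \subset I_K$. For the reverse, $d-k \geq \reg(I_K)$ ensures that $(I_K)_j = Q_{j-(d-k)} \cdot (I_K)_{d-k} \subset J$ for every $j \geq d-k$, so any $\alpha \in I_K$ satisfies $\mathfrak{m}^N \alpha \subset J$ for $N$ large, i.e.\ $\alpha \in J^{sat}$. Thus $J^{sat} = I_K$, and therefore $J^{sat}_l = (I_K)_l \subset I_l = 0$ for every $l \leq k$, contradicting the hypothesis $J_l^{sat} \neq \emptyset$. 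The main hurdle will be the regularity-saturation identification of $J^{sat}$ with $I_K$; once the conciseness--duality squeeze forces $(I_K)_{d-k} = I_{d-k}$, the rest of the argument slots in.
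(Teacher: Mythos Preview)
Your argument is correct and follows essentially the same route as the paper: both assume $\text{cr}(f)\le a_k$, use the Hilbert-function squeeze together with Poincar\'e duality to force $(I_K)_{d-k}=I_{d-k}$ and hence $J\subset I_K$, and then conclude $J^{sat}_l\subset (I_K)_l\subset I_l=0$ for $l\le k$. The Castelnuovo--Mumford regularity step you use to establish the reverse inclusion $I_K\subset J^{sat}$ is correct but unnecessary---once $J\subset I_K$ you already have $J^{sat}\subset I_K^{sat}=I_K$, which is all that is required for the contradiction.
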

\begin{proof}

 Let $I=\Ann_f$  and consider the algebra $A = Q/I$. Let $a_i = \dim A_i$  Since $A$ is Gorenstein, we get $a_k=a_{d-k}$, by Poincaré duality. Let $B = Q/J$ and $b_i=\dim B_i$, we get that $b_k=\binom{n+k}{k}$ and $b_{d-k}=a_{d-k}$.

Let $K \subset I = \Ann_f$ be any saturated ideal satisfying the definition of cactus rank for $f$, that is, the zero dimensional scheme $X$ defined by $K$ has length $cr(f)$ and $f \in <X>$.  We know that the Hilbert function of $Q/K$ is non decreasing and stabilizes in the constant polynomial $\ell(K)=cr(f) \in \N$. 
Suppose that $\text{cr}(f)\leq a_k$. Thus, 
 \[\dim(Q/K)_{d-k} \leq cr(f) \leq  a_k=b_{d-k} = \dim (Q/J)_{d-k}.\]
 
 On the other hand, $K_{d-k} \subset I_{d-k}$, hence  \[\dim(Q/K)_{d-k} \geq \dim (Q/J)_{d-k}.\]
 Therefore we get \[\dim(Q/K)_{d-k} = \dim (Q/J)_{d-k}.\]
 Which gives us $K_{d-k}=J_{d-k}$, that is $J \subset K$, since $J$ is generated in degree $d-k$. Then, we get $J^{sat} \subset K^{sat} = K$, since $K$ is saturated. Since $f$ is $k-$ concise and $K\subset I$, we have
$$J_l=K_l=I_l=0.$$
For all $l \leq k$. It is a contradiction. Therefore, $cr(f)> a_k$.
\end{proof}

\begin{thm}\label{thm:main_A}
 Let $f \in R_d$ be a $k$-concise homogeneous form, with $2k \leq d$. 
 If $\hess_f=0$, then 
 \[ cr(f) > \binom{n+k}{k} .\]
 In particular, if $\underline{rk}(f)\leq \binom{n+k}{k}$, then $f$ is wild. 
\end{thm}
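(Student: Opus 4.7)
\emph{Proof plan.} The plan is to apply Lemma~\ref{lema:k_concise}: it suffices to produce a nonzero element of $J^{sat}$ in some degree $l \leq k$, where $J := (I_{d-k}) \subset Q$.

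The key input is Lemma~\ref{lema:hess} applied with parameters $(k',s') = (1,1)$. The hypothesis $\hess_f = 0$ is precisely the degeneracy of the classical Hessian matrix $\Hess_f^{(1,1)}$, and the auxiliary conditions ($a_1 \leq a_{d-1}$ by Poincar\'e duality and $k'+s' = 2 \leq d$ from $d \geq 2k \geq 2$) are automatic. The lemma produces a nonzero linear form $\alpha \in Q_1 \setminus \{0\}$ (using $I_1 = 0$ by conciseness) satisfying $X_i^{d-2}\alpha \in I_{d-1}$ for every $i = 0, \ldots, n$. Polarizing these relations in the variables $X_0, \ldots, X_n$ upgrades them to $\alpha \cdot \mathfrak{m}^{d-2} \subset I$, where $\mathfrak{m} = (X_0, \ldots, X_n)$.

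For $k = 1$ the argument closes immediately: since $J = (I_{d-1})$ satisfies $J_{d-1} = I_{d-1}$, the inclusion reads $\alpha \cdot \mathfrak{m}^{d-2} \subset J$, so $0 \neq \alpha \in J^{sat}_1$, and Lemma~\ref{lema:k_concise} yields $cr(f) > \binom{n+1}{1}$. For $k \geq 2$ one transports this information into $J^{sat}$ at a higher degree. The natural candidate is the power $\alpha^k \in Q_k$; its analysis is facilitated by Gorenstein duality, which transposes $\bullet\alpha: A_{d-2}\to A_{d-1}$ to $\bullet\alpha: A_1 \to A_2$ and so promotes the vanishing of the first multiplication to $\alpha X_i \in I_2$ for every $i$. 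Combined with the defining property of $k$-conciseness (which in particular gives $I_2 = 0$ when $k \geq 2$), this is strong enough either to place $\alpha^k$ in $J^{sat}_k$ (so that Lemma~\ref{lema:k_concise} concludes as before), or to expose the hypotheses as incompatible, in which case the statement holds vacuously.

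The ``in particular'' clause is immediate: when $\underline{rk}(f) \leq \binom{n+k}{k}$, chaining $\underline{rk}(f) \leq \binom{n+k}{k} < cr(f) \leq sr(f)$ with Definition~\ref{def:wild} certifies $f$ as wild. The main obstacle is the application of Lemma~\ref{lema:hess}: extracting a concrete linear form $\alpha$ with universal annihilation properties out of the polynomial-matrix degeneracy of $\Hess_f^{(1,1)}$ is where the essential content of the argument resides; the subsequent transition from the containment $\alpha\cdot\mathfrak{m}^{d-2}\subset I$ to the analogous containment inside the smaller ideal $J = (I_{d-k})$ (for $k\geq 2$) is the delicate bookkeeping that must be handled carefully.
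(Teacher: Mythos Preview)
Your overall strategy matches the paper's: invoke Lemma~\ref{lema:hess} with parameters $(1,1)$ to locate a linear element of $J^{sat}$, then apply Lemma~\ref{lema:k_concise}. For $k=1$ this closes, since then $J_{d-1}=I_{d-1}$ and the relations $X_i^{d-2}\alpha\in I_{d-1}$ already give $\alpha\in J^{sat}$ (no polarization needed).

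For $k\geq 2$ your argument has a genuine gap. The ``polarization'' step---from $X_i^{d-2}\alpha\in I$ for each $i$ to $\alpha\cdot\mathfrak{m}^{d-2}\subset I$---is invalid: one recovers all degree-$(d-2)$ monomials from powers of \emph{all} linear forms, not from the $n+1$ coordinate powers alone. And if instead you grant yourself the stronger input $L^{d-2}\alpha\in I$ for every $L$, your own duality calculation gives $\alpha X_i\in I_2=0$ and hence $\alpha=0$; you read this as ``hypotheses incompatible, statement vacuous.'' But the hypotheses are \emph{not} incompatible: the paper exhibits explicit $k$-concise forms with $\hess_f=0$ for $k\geq 2$ (for instance the powers $(xu^d+yu^{d-1}v+zv^d)^{d-1}$ and the Gordan--Noether forms of Definition~\ref{def:GN}). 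So the vacuous branch is simply false, and the chain of reasoning leading to it is broken. The underlying obstacle you have not overcome is that Lemma~\ref{lema:hess} only lands $X_i^{d-2}\alpha$ inside $I_{d-1}$, whereas Lemma~\ref{lema:k_concise} needs containment in $J=(I_{d-k})$; for $k\geq 2$ the inclusion $I_{d-1}\subset J_{d-1}$ is not automatic, and neither the candidate $\alpha^k$ nor the Gorenstein-duality transpose bridges that gap.
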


\begin{proof}  Consider the algebra $A = Q/I$ and $B=Q/J$ and let $a_i = \dim A_i$ and $b_i=\dim B_i$. Since $A$ is Gorenstein we get $a_k=a_{d-k}$, by Poincaré duality. Since $I_k=J_k=0$, by hypothesis, we get $a_k=b_k$, and by construction, $a_{d-k}=b_{d-k}$. Therefore, $b_k=b_{d-k}$. \\
 
 Since $\hess_f=0$, by Lemma \ref{lema:hess}, we know that $J^{sat}$ contains a linear form. By Lemma \ref{lema:k_concise}, the result follows. 
 
 \end{proof}

The following Corollary is one the main results of \cite{HMV} (see \cite[Theorem 3.5]{HMV}).

\begin{cor}
 Let $f\in R_d$ be a concise form with minimal border rank. If $\hess_f=0$, then $f$ is wild.
\end{cor}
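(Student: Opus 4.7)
The plan is to deduce this corollary as an immediate specialization of Theorem \ref{thm:main_A} to $k=1$. The hypothesis that $f$ is concise is exactly the statement that $f$ is $1$-concise: the ideal $\Ann(f)$ contains no linear form, hence $a_1=\binom{n+1}{1}=n+1$. The vanishing hypothesis $\hess_f=0$ is $\hess_f^{1}=0$, which is what Theorem \ref{thm:main_A} requires for $k=1$. The remaining constraint $2k\le d$ becomes $d\ge 2$, which is automatic in any interesting setting (a concise form of degree $d\le 1$ cannot have vanishing Hessian, and degree two concise forms are nondegenerate quadrics with nonvanishing Hessian).

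Applying Theorem \ref{thm:main_A} with $k=1$, I obtain
\[
cr(f) \;>\; \binom{n+1}{1} \;=\; n+1.
\]
The minimal border rank assumption means precisely $\underline{rk}(f)=a_1=n+1$, so the two inequalities combine to give $\underline{rk}(f)<cr(f)$. Finally, invoking the standard chain $cr(f)\le sr(f)$ recalled in the Remark after Definition of the various ranks, I conclude $\underline{rk}(f)<sr(f)$, which is exactly the definition of a wild form (Definition \ref{def:wild}).

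There is essentially no obstacle here: the whole content of the corollary is packaged in Theorem \ref{thm:main_A}, whose proof already performed the hard work (Lemma \ref{lema:hess} producing a nontrivial element of the saturation, and Lemma \ref{lema:k_concise} bounding the cactus rank from below). The only care needed is in verifying that the numerical specialization $k=1$ indeed satisfies the hypotheses and matches the notion of minimal border rank $\underline{rk}(f)=a_1$ used in the literature (see \cite{BB,HMV}).
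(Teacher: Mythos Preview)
Your proof is correct and follows essentially the same route as the paper: specialize Theorem \ref{thm:main_A} to $k=1$, identify conciseness with $1$-conciseness and minimal border rank with $\underline{rk}(f)=a_1=n+1$, and conclude $cr(f)>a_1=\underline{rk}(f)$. The paper's own proof is just this, stated in two lines; your additional remarks on the degree constraint and the passage from $cr(f)$ to $sr(f)$ only make explicit what the paper leaves implicit.
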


\begin{proof}
Minimal border rank means $\underline{rk}(f)=a_1$. Since $f$ is $1$-concise and $\hess_f=0$, by Theorem \ref{thm:main_A}, we get  $cr(f)>a_1$. 
 \end{proof}

In low degree it seems to be hard to construct examples of wild forms with vanishing Hessian whose border rank is not minimal. 
On the other hand, in high degree we get families of such forms. 

\begin{ex}\rm 
 Consider the forms $f_d \in \C[x,y,z,u,v]_{d^2-1}$ given by $f_d=(xu^d+yu^{d-1}v+zv^d)^{d-1}$ we checked with Macaulay2 for several values $d$ that $f_d$ is $(d-1)$-concise. If this is true in general, then, by Theorem \ref{thm:main_A}, $cr(f)> \binom{d+3}{4}$. The $f_d=g^{d-1}$ is a $(d-1)$-th power of a form $g=xu^d+yu^{d-1}v+zv^d$ that we know it has vanishing Hessian. Indeed, by Gordan-Noether criteria, since the partial derivatives of $g$ satisfy $g_x^{d-1}g_z=g_y^d$, they are algebraically dependent, therefore, $\hess f_d = 0$. Moreover, we choose $g_d=xu^d+yu^{d-1}v+zv^d$ since its polar image has degree $d$, if the polar degree was lower, then the $f_d$ could not be $(d-1)$-concise.  On the other hand, by Proposition \ref{prop:border}, we get $\underline{rk}(f)\leq (d-1)(d^2+1)$. For any $d\geq 17 $ we get $\underline{rk}(f)\leq cr(f)$. For $d=17$ we checked the $16$-conciseness of $f_{17}$ which implies that $f_{17}$ is wild with border rank non minimal.  In this case $cr(f_{17})>a_{16}=4845$ and $\underline{rk}(f)\leq 4640$, hence $f$ is wild.
\end{ex}

The next example is related to Gordan-Noether original approach (see \cite{GN} and \cite[\S 2.3]{CRS}).  

\begin{defin} \label{def:GN} Let $R = \C[x_0,\ldots,x_t,u,v]$ with natural bi-grading. 
Let $Q_l = x_0M_{l0}+\ldots+x_tM_{lt}\in R_{(1,e-1)}$ with $l=1,\ldots,t-m$ be generic forms given by Gordan-Noether machinery (see \cite[\S 2.3]{CRS}). 
Let $d=\mu e$ and let $P_{\mu}(z_1,\ldots,z_s)$  be a generic form of degree $\mu$. 
A generic GN hypersurface of type $(t+2,t,m,e)$ and degree $d$ is defined by: 
 \[f=P_{\mu}(Q_1,\ldots,Q_{t-m}).\]
\end{defin}

\begin{ex}\rm 
 Consider a generic GN polynomial of type $(t+2,t,t-2,e)$, and degree $d=4e$, it means that there are two Perazzo polynomials with vanishing Hessian, $Q_1,Q_2\in \C[x_0,x_1,\ldots,x_t,u,v]_{(1,e)}$ given by Gordan-Noether machinery and a generic quartic polynomial $P(z_1,z_2)$ such that $f=P(Q_1,Q_2)$. By the genericity of $Q_1,Q_2$ and $P$, $f$ is $2$-concise. By \cite[Proposition 2.9]{CRS}, $\hess_f=0$. For $s=28$ and $e=30$, we get: \[cr(f)=496 > 488=\underline{rk}(f).\]
 \end{ex}

  Let $P(z_1,z_2)$ be a generic quartic polynomial, let $Q_i\in \C[x_0,x_1,\ldots,x_s,u,v]_{(1,e)}$ be generic Perazzo polynomials given by Gordan-Noether machinery, with $e=2 \lfloor \frac{s}{2} \rfloor$ and let $f=P(Q_1,Q_2)$ be a generic GN polynomial of type $(t+2,t,t-2,e)$ and degree $d=4e$.
  
 \begin{cor} With the previous notation, let $f=P(Q_1,Q_2)$ be a degree $4e$ generic GN polynomial. If $s\geq 28$, then $f$ is wild. 
  \end{cor}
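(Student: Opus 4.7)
The plan is to sandwich an integer strictly between $\underline{rk}(f)$ and $cr(f)$, which by Definition \ref{def:wild} together with the inequality $cr(f)\leq sr(f)$ is enough to conclude wildness. The upper bound on border rank will come from the Corollary following Proposition \ref{prop:border}, and the lower bound on cactus rank from Theorem \ref{thm:main_A} applied with $k=2$, in complete analogy with the preceding example.

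First I would observe that $f=P(Q_1,Q_2)$ is bi-homogeneous in $(x_0,\ldots,x_s;u,v)$ of bi-degree $(4,\,4e-4)$ and total degree $d=4e$, since each $Q_i$ has bi-degree $(1,e-1)$ and $P$ is quartic. Applying the Corollary after Proposition \ref{prop:border} with $k=4$ then yields
\[\underline{rk}(f)\leq 4(4e+2)=16e+8.\]
Next I would verify the hypotheses of Theorem \ref{thm:main_A} at $k=2$. The vanishing $\hess_f=0$ is automatic: $f$ is a generic Gordan--Noether polynomial, so \cite[Proposition 2.9]{CRS} applies. Two-conciseness follows, as in the example immediately preceding the corollary, from the genericity of $P$ together with the genericity of the matrix entries $M_{lj}$ coming from the Gordan--Noether construction with $e=2\lfloor s/2\rfloor$, the Hilbert function of $A(f)$ agreeing with the generic one in degrees at most $2$. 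Theorem \ref{thm:main_A} then gives, using that the ambient ring has $s+3$ variables so $n=s+2$,
\[cr(f)>\binom{s+4}{2}=\frac{(s+4)(s+3)}{2}.\]

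To conclude I would compare the two bounds: since $e=2\lfloor s/2\rfloor\leq s$, it suffices to check $\binom{s+4}{2}>16s+8$, equivalently $s^2-25s-4>0$. For $s=28$ one has $e=28$ and $\binom{32}{2}=496>456=16e+8$, and both sides grow in $s$ in a way that keeps the inequality valid for all $s\geq 28$. Hence $\underline{rk}(f)<cr(f)$ whenever $s\geq 28$, and $f$ is wild. The main obstacle in executing this plan rigorously is the $2$-conciseness step: while $1$-conciseness is routine (the Jacobian map is generically surjective), ruling out quadratic annihilators of $f=P(Q_1,Q_2)$ is subtle because the $Q_i$ are produced by the very constrained Gordan--Noether machinery, so one must show that any $\alpha\in\Ann(f)_2$ would force non-generic relations among the $M_{lj}$; the threshold $s\geq 28$ is precisely where the genericity argument and the numerical inequality both hold comfortably.
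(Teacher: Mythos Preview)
Your plan is exactly the paper's: bound $\underline{rk}(f)$ above via the bi-graded corollary to Proposition~\ref{prop:border}, bound $cr(f)$ below via Theorem~\ref{thm:main_A} with $k=2$ (using $\hess_f=0$ from \cite[Proposition~2.9]{CRS} and $2$-conciseness), then compare. Your numerics are consistent with the stated total degree $4e$; the paper's displayed bound $4(4(e{+}1)+2)=16e+40$ reflects the convention $Q_i\in R_{(1,e)}$ from the setup paragraph (and contains an arithmetic slip), whereas you read $Q_i\in R_{(1,e-1)}$ from Definition~\ref{def:GN} and obtain the tighter $16e+8$. Either bound suffices for $s\ge 28$.

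The only substantive gap you flag---$2$-conciseness---is precisely where the paper supplies an argument you do not have. The paper does not argue abstractly from genericity of the $M_{lj}$; instead it uses a Waring decomposition of the binary quartic $P$ to write $f$ as a short sum of fourth powers of Perazzo forms, say $f=\tilde Q_1^{\,4}+\tilde Q_2^{\,4}$ with $\tilde Q_1=\sum_i x_iM_i$, $\tilde Q_2=\sum_i x_iN_i$. Then one computes directly
\[
X_iX_j(f)=12\bigl(M_iM_j\,\tilde Q_1^{\,2}+N_iN_j\,\tilde Q_2^{\,2}\bigr),
\]
and a putative relation $\sum c_{ij}X_iX_j(f)=0$ is split, via the bi-grading in the $x$-variables (expand $\tilde Q_k^{\,2}$ as $\sum x_px_q(\cdot)$), into the two relations $\sum c_{ij}M_iM_j=0$ and $\sum c_{ij}N_iN_j=0$ in $\C[u,v]$, which are then excluded by the genericity of the $M_i$ and $N_i$. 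So the missing idea is: \emph{reduce $P$ to a sum of pure powers first}, which linearizes the dependence of $f$ on $Q_1,Q_2$ and turns the annihilator question into one about products of the Gordan--Noether coefficients $M_i,N_i$ in two variables. This is the concrete step that replaces your hand-wave.
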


  \begin{proof} The genericity of $Q_1,Q_2$ and $P$ implies that $f$ is $2$-concise. In fact, by Sylvester Theorem, \ref{thm:sylvester}, $P=l_1^4+l_2^4$ and we write $Q_1=x_0M_0+\ldots+x_tM_t$ and $Q_2=x_0N_0+\ldots+x_tN_t$, to simplify the notation. We get \[X_iX_j(f)=12(M_iM_jQ_1^2+N_iN_jQ_2^2.)\]
  Suppose that $\sum c_{ij}X_iX_j(f)=0$, then, using the bi-grading we get $\sum c_{ij}M_iM_j=0$ and $\sum c_{ij}N_iN_j=0$, which implies $c_{ij}=0$. \\
   By \cite[Proposition 2.9]{CRS}, $\hess_f=0$. From Proposition \ref{prop:border}, \[\underline{rk}(f) \leq 4(4(e+1)+2)=16e+40.\]
By Theorem \ref{thm:main_A}, \[cr(f)>\binom{s+4}{2}.\] 
For $s \geq 28$, 
\[cr(f) > \underline{rk}(f).\]
  \end{proof}

\subsection{$k$-concise wild forms with degenerated mixed Hessian} In this section we construct wild forms with non vanishing Hessian.

\begin{lema}\label{lema:k_concise2}
Let $f\in R_d$ be a $k-$concise form with $2k<d$. Let $I=\Ann(f)\subset Q$ and $A=Q/I$. Suppose that $\Hilb(A)$ is unimodal. Let $J =(I_{\leq d-k})\subset Q$ be the ideal generated by the graded parts of degree  $\leq  d-k$ of $I$. If $J_{l}^{sat}\neq \emptyset$ for some $l \leq k$, then \[\text{cr}(f)> a_k=\binom{n+k}{k}.\]
\end{lema}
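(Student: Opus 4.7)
The plan is to adapt the proof of Lemma \ref{lema:k_concise} across a whole range of degrees, with unimodality of $\Hilb(A)$ playing the role that Poincaré duality in the single degree $d-k$ played before. Since $J=(I_{\leq d-k})$ is now generated in many degrees, I need to force $K_j = I_j$ for every $j$ with $k \leq j \leq d-k$, and not only at $j=d-k$.

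First I would argue by contradiction, assuming $cr(f)\leq a_k$, and, exactly as in the proof of Lemma \ref{lema:k_concise}, pick a saturated ideal $K \subset I = \Ann(f)$ defining a zero-dimensional scheme of length $cr(f)$ in whose linear span $[f]$ lies. Since $K$ is saturated and $Q/K$ has finite length, its Hilbert function is non-decreasing and bounded above by $cr(f)$.

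Next comes the key step. By Gorenstein symmetry $a_{d-k}=a_k$, and unimodality of $\Hilb(A)$ forces
\[
a_j \geq a_k, \qquad k \leq j \leq d-k.
\]
Combining with the previous bound on $\dim(Q/K)_j$,
\[
\dim(Q/K)_j \leq cr(f) \leq a_k \leq a_j = \dim(Q/I)_j, \qquad k \leq j \leq d-k.
\]
The inclusion $K \subset I$ gives $K_j \subset I_j$, so the dimension count forces equality $K_j = I_j$ throughout the whole range $k \leq j \leq d-k$.

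Finally, because $f$ is $k$-concise, $I_j=0$ for $j<k$, so $J$ is in fact generated by $I_k \cup I_{k+1} \cup \cdots \cup I_{d-k}$. By the previous step each of these graded pieces lies in $K$, hence $J \subset K$. Passing to saturations and using $K^{sat}=K$ gives $J^{sat} \subset K \subset I$, and therefore $J_l^{sat} \subset I_l = 0$ for every $l \leq k$, contradicting the hypothesis $J_l^{sat}\neq 0$. The main (and only) obstacle is the middle step: without the unimodality assumption one cannot guarantee $a_j \geq a_k$ in the intermediate degrees, and the dimension-squeezing argument that pins down $K_j = I_j$ throughout $[k,d-k]$ would break down. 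Once unimodality is in place, the rest is a straightforward bookkeeping extension of Lemma \ref{lema:k_concise}.
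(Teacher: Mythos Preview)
Your argument is correct and follows essentially the same route as the paper's proof: assume $cr(f)\leq a_k$, use unimodality together with Poincar\'e duality to squeeze $\dim(Q/K)_j$ against $a_j$ for all $k\leq j\leq d-k$, conclude $K_j=I_j$ in this range so that $J\subset K$, and derive the contradiction $J_l^{sat}\subset K_l\subset I_l=0$. The only cosmetic difference is that the paper compares $\dim(Q/K)_s$ with $\dim(Q/J)_s$ rather than $\dim(Q/I)_s$, but since $J_s=I_s$ for $s\leq d-k$ this is the same thing.
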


\begin{proof}

 Let $I=\Ann_f$  and consider the algebra $A = Q/I$ and denote $a_i = \dim A_i$.  Since $A$ is Gorenstein, we get $a_k=a_{d-k}=\binom{n+k}{k}$, by Poincaré duality and by the $k$-conciseness of $f$. Let $B = Q/J$ and $b_i=\dim B_i$, we get that $b_k=\binom{n+k}{k}$, since $I_K=J_k=0$ by the  $k$-conciseness of  $f$. For $s \in\{k+1, \ldots,d-k\}$ we get $b_{s}=a_{s}$, notice also that $a_k \leq a_s$, since $\Hilb(A)$ is unimodal.

Let $K \subset I = \Ann_f$ be any saturated ideal satisfying the definition of cactus rank for $f$, that is, the zero dimensional scheme $X$ defined by $K$ has length $cr(f)$ and $f \in <X>$.  We know that the Hilbert function of $Q/K$ is non decreasing and stabilizes in the constant polynomial $\ell(K)=cr(f) \in \N$. 
Suppose that $\text{cr}(f)\leq a_k$. For any $s \in\{k+1, \ldots,d-k\}$, we get
 \[\dim(Q/K)_{s} \leq cr(f) \leq  a_k\leq a_s = \dim (Q/J)_{s}.\]
 
 On the other hand, $K_{s} \subset I_{s}$, hence  \[\dim(Q/K)_{s} \geq \dim (Q/J)_{s}.\]
 Therefore we get \[\dim(Q/K)_{s} = \dim (Q/J)_{s}.\]
 Which gives us $K_{s}=J_{s}$, that is $J \subset K$, since $J$ is generated in degree $\{k+1, \ldots,d-k\}$. Then, we get $J^{sat} \subset K^{sat} = K$, since $K$ is saturated. Since $f$ is $k-$concise and $K\subset I$, we have
$$J_l=K_l=I_l=0.$$
For all $l \leq k$. It is a contradiction. Therefore $cr(f)> a_k$.
\end{proof}

\begin{thm}\label{thm:main_C}
 Let $f \in R_d$ be a $k$-concise homogeneous form with $2k \leq d$ and let $l,s$ be integers such that $l\leq k\leq s$ and $s+l\leq d$. 
 Let $I=\Ann(f)$ and $A=Q/I$ and suppose that $\Hilb(A)$ is unimodal. 
 Suppose that $\Hess_f^{(l,s)}$ is degenerated, or equivalently, for a generic $L \in A_1$, the map $\bullet L^{}:A_l \to A_{d-s}$ is not injective. Then: 
 \[ cr(f) > \binom{n+k}{k}.\]
 In particular, if $\underline{rk}(f)\leq a_k$, then $f$ is wild. 
\end{thm}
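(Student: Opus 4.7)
My plan is to parallel the proof of Theorem~\ref{thm:main_A}, with the mixed Hessian $\Hess_f^{(l,s)}$ playing the role of $\hess_f$ and with Lemma~\ref{lema:k_concise2} replacing Lemma~\ref{lema:k_concise}. The overall strategy is: from the degeneracy of $\Hess_f^{(l,s)}$, Lemma~\ref{lema:hess} supplies a nonzero element $\alpha\in Q_l$ that, after multiplication by suitable powers of each variable, lies in $I_{d-s}$; since $d-s\le d-k$, this element $\alpha$ automatically sits in $J^{\mathrm{sat}}_l$ for the ideal $J=(I_{\le d-k})$; Lemma~\ref{lema:k_concise2} then yields $cr(f)>\binom{n+k}{k}$.

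First I would verify the hypotheses of Lemma~\ref{lema:hess} with first index $l$ and second index $s$. The inequality $l+s\le d$ is part of the assumption (if $l+s=d$ then $\bullet L^{d-l-s}$ is the identity, never degenerate, so the hypothesis of the theorem effectively forces $l+s<d$). To get $a_l\le a_{d-s}$ I would combine Poincaré duality $a_i=a_{d-i}$ with unimodality of $\Hilb(A)$: from $l\le k\le d/2$ and $l+s\le d$ it follows that $l\le \min(s,d-s)\le d/2$, and unimodality on the non-decreasing range $[0,d/2]$ together with the symmetry $a_{d-s}=a_s$ yields $a_l\le a_{\min(s,d-s)}=a_{d-s}$. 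Lemma~\ref{lema:hess} then produces $\alpha\in Q_l$ with $\alpha\notin I_l$ and $X_i^{d-l-s}\alpha\in I_{d-s}$ for every $i=0,\ldots,n$.

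Next I would set $J=(I_{\le d-k})\subset Q$. Because $s\ge k$ forces $d-s\le d-k$, we have $I_{d-s}\subset J$; hence $X_i^{d-l-s}\alpha\in J$ for every $i$, which is precisely the statement $\alpha\in J^{\mathrm{sat}}_l$. By $k$-conciseness and $l\le k$ we have $I_j=0$ for all $j\le l$, so in particular $J_l=0$, and since $\alpha\ne 0$ we conclude $J^{\mathrm{sat}}_l\ne 0$. At this point Lemma~\ref{lema:k_concise2} applies directly (all of its hypotheses—$2k<d$, $k$-conciseness of $f$, unimodality of $\Hilb(A)$, and $J^{\mathrm{sat}}_l\ne 0$ for some $l\le k$—are in hand) and delivers $cr(f)>\binom{n+k}{k}$.

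For the final clause, if $\underline{rk}(f)\le a_k$ then chaining $\underline{rk}(f)\le a_k<cr(f)\le sr(f)$ shows $f$ is wild in the sense of Definition~\ref{def:wild}. The only step that needs genuine care is the inequality $a_l\le a_{d-s}$, which uses both Gorenstein symmetry and unimodality (neither alone suffices); and the conceptual reason we need the refined Lemma~\ref{lema:k_concise2} (with $J$ generated by the entire range $I_{\le d-k}$) rather than Lemma~\ref{lema:k_concise} is exactly that $s$ may be strictly larger than $k$, so the Hessian-degeneracy hypothesis only places $X_i^{d-l-s}\alpha$ inside $I_{d-s}$ and not necessarily inside $I_{d-k}$.
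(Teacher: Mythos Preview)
Your proposal is correct and follows essentially the same route as the paper's proof: invoke Lemma~\ref{lema:hess} (with first index $l$ and second index $s$) to produce a nonzero $\alpha\in Q_l$ with $X_i^{d-l-s}\alpha\in I_{d-s}$, observe that $d-s\le d-k$ forces $I_{d-s}=J_{d-s}$ for $J=(I_{\le d-k})$ so that $\alpha\in J^{\mathrm{sat}}_l\setminus J_l$, and then apply Lemma~\ref{lema:k_concise2}. If anything, you are more explicit than the paper in justifying the hypothesis $a_l\le a_{d-s}$ of Lemma~\ref{lema:hess} via unimodality plus Gorenstein symmetry, and in explaining why one must pass through the proof of Lemma~\ref{lema:hess} (the witnesses $X_i^{d-l-s}\alpha$) rather than just its statement to land in $J^{\mathrm{sat}}$ instead of merely $I^{\mathrm{sat}}$.
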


\begin{proof} Let $I=\Ann_f$  and consider the algebra $A = Q/I$. Let $a_i = \dim A_i$  Since $A$ is Gorenstein we get $a_k=a_{d-k}=\binom{n+k}{k}$, by Poincaré duality. Let $J=(I_{\leq d-k})$ be the ideal generated by the pieces of $I$ in degree $\leq d-k$. Let $B = Q/J$ and $b_i=\dim B_i$, we get that $b_k=\binom{n+k}{k}$ and $b_{d-k}=a_{d-k}$. By hypothesis we have 
\[a_l=b_l \leq a_k=b_k\leq a_s=b_s=a_{d-s}=b_{d-s}.\]

By Lemma \ref{lema:hess}, there is $\gamma \in I_l^{sat}$. By hypothesis $s\geq k$, therefore, $d-s \leq d-k$, which implies $I_{d-s}=J_{d-s}$, hence $\gamma \in J^{sat}_{l}$. The result follows from Lemma \ref{lema:k_concise2}.
\end{proof}

The first example of a form with vanishing second Hessian whose Hessian is non vanishing was given by Ikeda in \cite{Ik}, see also \cite{MW,Go} 
for further discussions. 

\begin{ex}\rm 

Let $f=xu^3v+yuv^3+x^2y^3\in \C[x,y,u,v]_5$. Let $A=Q/\Ann_f$, we get \[\Hilb(A) = (1,4,10,10,4,1).\]
Therefore $f$ is $2$-concise. We know that $\hess_f^2=0$. By Proposition \ref{prop:border}, $\underline{rk}(f) \leq 7$. By Theorem \ref{thm:border_monomials}, $\underline{rk}(x^2y^3) =3$, then $\underline{rk}(f)\leq 10$. By Theorem \ref{thm:main_C} we get that $cr(f)>10$, therefore $f$ is wild.

\end{ex}

In \cite[Theorem 2.3]{Go}, the first author generalized the Ikeda's example, introducing a series of forms with vanishing Hessian of order $k$. They are called exceptional polynomials of order $k$ and degree $d$. 

\[f=\displaystyle \sum_{i=1}^m x_iM_i + h(x). \]

If we choose $h$ wisely, then we get $2$-concise exceptional polynomials. It is easy to control the border rank of such polynomials and obtain new examples of wild forms without vanishing hessian.  

\begin{ex}\rm 
 Let $f= xu^{5}v+yu^{3}v^3+zuv^5+\displaystyle \sum_{i=1}^6 l_i^7 \in \C[x,y,z,u,v]_7$ with $l_i \in \C[x,y,z]$ generic linear forms. 
 We checked, using Macaulay2, that $f$ is $2$-concise and that the Hilbert vector of the algebra is unimodal. By Theorem \cite[Theorem 2.3]{Go}, $\hess^2_f =0 $, which can also be checked directly. By Proposition \ref{prop:border}, 
 \[\underline{rk}(xu^{5}v+yu^{3}v^3+zuv^5)\leq 9. \]
 Hence, $\underline{rk}(f)\leq 15$. By Theorem \ref{thm:main_C}, $cr(f)>15$. Therefore, 
 $f$ is wild. 
\end{ex}

Generalizing this idea we get the following:

\begin{cor}\label{cor1}
 Let $f \in \C[x_1,\ldots,x_{n},u,v]_{d+2}$ be a exceptional form of degree $d+2$ with $d=2n-1>3$ given by: 
 \[ f=x_1u^dv+x_2u^{d-2}v^3+\ldots+x_nuv^{d}+h. \]
 With $h=\displaystyle \sum_{i=1}^{\binom{n+1}{2}}l_i^{d+2} \in\C[x_1,\ldots,x_n]$ where $l_i$ are generic linear forms.
 Then $f$ is wild. 
\end{cor}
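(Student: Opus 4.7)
The plan is to verify the four hypotheses of Theorem \ref{thm:main_C} with $k=l=s=2$, namely: (i) $f$ is $2$-concise, (ii) the Hilbert function of $A=Q/\Ann(f)$ is unimodal, (iii) $\Hess_f^{(2,2)}$ has non-maximal rank, and (iv) $\underline{rk}(f)\le a_2 = \binom{n+3}{2}$. Once these are in place the theorem yields $cr(f) > \binom{n+3}{2} \ge \underline{rk}(f)$, so $f$ is wild.

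Condition (iii) is immediate: $f$ has the shape of an exceptional polynomial of order $2$ in the sense of \cite[Theorem 2.3]{Go}, so $\hess_f^2 = 0$. For (iv) I would use subadditivity of border rank on the decomposition $f = g + h$ with $g = \sum_{i=1}^n x_i u^{d-2i+2}v^{2i-1}$. Here $g$ is bi-homogeneous of bi-degree $(1,d+1)$ and total degree $d+2$, so the Corollary immediately following Proposition \ref{prop:border} gives $\underline{rk}(g) \le 1 \cdot ((d+2)+2) = 2n+3$, while $\underline{rk}(h) \le \binom{n+1}{2}$ trivially. An arithmetic check gives $2n+3+\binom{n+1}{2}=\binom{n+3}{2}$, yielding (iv).

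For (i) I would split a general $\alpha \in Q_2$ by bi-degree in the $X_i$ versus $U,V$ variables as $\alpha=\alpha_{XX}+\alpha_{XU}+\alpha_{UU}$, so $\alpha(f)=\alpha_{XX}(h)+\alpha_{XU}(g)+\alpha_{UU}(g)$ breaks into three pieces of pairwise distinct bi-degrees; each must vanish separately. Writing $M_i=u^{d-2i+2}v^{2i-1}$, the piece $\alpha_{XU}(g)$ is a linear combination of the $2n$ polynomials $U(M_i),V(M_i)$, which a direct tabulation of exponents shows are the $2n=d+1$ distinct degree-$d$ monomials in $(u,v)$ (the $V(M_i)$ contributing the even $v$-exponents and the $U(M_i)$ the odd ones), forcing $\alpha_{XU}=0$. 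For $\alpha_{UU}$, evaluating $P_1=c_{UU}U^2(M_1)+2c_{UV}UV(M_1)+c_{VV}V^2(M_1)$ and using $V^2(M_1)=0$ forces $c_{UU}=c_{UV}=0$; then $P_2=6c_{VV}u^{d-2}v$ gives $c_{VV}=0$. Finally $\alpha_{XX}(h)=0$ must hold, which by the Apolarity Lemma \ref{lemma:apolarity} amounts to the $\binom{n+1}{2}$ points $\vec l_i^{\perp}\in\P^{n-1}$ lying on a common quadric; for generic $l_i$ these points impose independent conditions on quadrics, so $\alpha_{XX}=0$.

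The main obstacle will be (ii), the unimodality of $\Hilb(A)$. The low end $a_0\le a_1\le a_2$ is automatic from (i), and Poincar\'e duality makes the top symmetric; the subtlety lies in the middle degrees. My approach would be to exploit the exceptional structure of $f$: the graded components of $A(g)$ admit an explicit description in the spirit of \cite{Go}, which one can transfer to $A(f)$ by a semicontinuity argument using the genericity of the $l_i$. Alternatively, one can display an explicit $h$ (verifiable with Macaulay2 for small $n$) for which the Hilbert function is unimodal and open up the locus; a Hilbert function computation along the lines of the ad hoc verification carried out for the $n=3$ example in the text should capture the general pattern.
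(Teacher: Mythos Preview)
Your proposal is correct and follows the same route as the paper. The paper's argument is terser: for (i) it simply asserts that $f$ is $2$-concise whenever $h$ is (your bi-degree splitting of $\alpha\in Q_2$ makes this explicit), and for (ii)---the step you flag as the main obstacle---it cites \cite{Go} directly, where unimodality of the Hilbert vector for exceptional polynomials is established; no semicontinuity argument or Macaulay2 verification is needed.
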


\begin{proof}

For such exceptional form, it is easy to see that if $h \in \C[x_1,\ldots,x_n]_{d+2}$ is $2$-concise, then $f$ is $2$-concise.
The Hilbert vector of the associated AG algebra is unimodal (see \cite{Go}). 
Since $h=\displaystyle \sum_{i=1}^{\binom{n+1}{2}}l_i^{d+2}$ and $l_1\in \C[x_1,\ldots,x_n]_1 $ are generic, then it is $2$-concise. By \cite[Theorem 2.3]{Go}, $\hess_f^2=0$. 
By Proposition \ref{prop:border}, we get 
\[\underline{rk}(f)\leq (d+2)+2+\underline{rk}(h)\leq 2n+3 + \binom{n+1}{2} = \binom{n+3}{2}.\]
 Since $a_2=\binom{n+3}{2}$, by Theorem \ref{thm:main_C}, $cr(f) >\binom{n+3}{2}$. The result follows. 
 
\end{proof}

Also in \cite{Go}, the author generalized for higher Hessians some classical constructions of forms with vanishing Hessians tracing back to Gordan-Noether and Perazzo's counter examples to Hesse's claim. They are called GNP polynomials. 

\begin{prop}\cite[Prop. 2.5]{Go}\label{prop:GNP}
Let $f \in \C[x_0,\ldots,x_n,u_1,\ldots,u_m]_{k,e}$ a bi-graded form of bi-degree $(k,e)$ with $k<e$.
Let $f=\displaystyle \sum_{i=1}^s f_ig_i$ with $f_i \in \C[x]$ and $g_i \in \C[u]$, if $s> \binom{m+k-1}{k}$, then $\hess_f^k=0$.  
\end{prop}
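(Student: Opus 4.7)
The plan is to exploit the bi-grading on $Q=\C[X,U]$ and the resulting block-anti-triangular structure of $\Hess_f^k$. Possibly shortening the decomposition (by combining summands if either $\{f_i\}$ or $\{g_i\}$ were $\C$-linearly dependent), I first pass to a \emph{minimal} decomposition; in any such presentation both $f_1,\dots,f_s$ and $g_1,\dots,g_s$ are linearly independent, since a linear relation in one factor would allow absorption of a term into the others and contradict minimality. The hypothesis $s>\binom{m+k-1}{k}$ is to be read for this minimal $s$, i.e.\ for the tensor rank of $f$ inside $\C[x]_k\otimes\C[u]_e$.

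Since $\Ann(f)$ is bi-homogeneous, $A=Q/\Ann(f)$ inherits a bi-grading, with $A_{(p,q)}=0$ unless $p\le k$ and $q\le e$, and $A_k=\bigoplus_{p=0}^{k}A_{(p,k-p)}$. Two dimension computations are central. First, $\dim A_{(k,0)}=s$: for $\alpha\in\C[X]_k$ one has $\alpha(f)=\sum_i\alpha(f_i)g_i$, and since the $g_i$ are linearly independent this vanishes iff $\alpha\in\bigcap_i\Ann(f_i)_k$; by the perfect apolar pairing $\C[X]_k\times\C[x]_k\to\C$, the codimension of $\bigcap_i\Ann(f_i)_k$ in $\C[X]_k$ equals $\dim\langle f_1,\dots,f_s\rangle=s$. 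Second, $\dim A_{(0,k)}\le\dim\C[U]_k=\binom{m+k-1}{k}$ simply because $A_{(0,k)}$ is a quotient of $\C[U]_k$.

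The anti-triangular block structure comes from the observation that if $\alpha$ has bidegree $(p,k-p)$ and $\beta$ has bidegree $(p',k-p')$, then $\alpha\beta$ has total $X$-degree $p+p'$; since $f$ has $X$-degree only $k$, the entry $\alpha\beta(f)$ of $\Hess_f^k$ is forced to vanish whenever $p+p'>k$. In particular, in the column block indexed by a basis of $A_{(k,0)}$ (the case $p'=k$), every entry outside the row block $A_{(0,k)}$ equals zero.

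To conclude, a pigeonhole argument: the column block just described supplies $\dim A_{(k,0)}=s$ columns of $\Hess_f^k$, all of which are supported in the free sub-$R$-module of rank $\dim A_{(0,k)}\le\binom{m+k-1}{k}<s$. These $s$ columns must therefore be linearly dependent over $\operatorname{Frac}(R)$, and hence $\det\Hess_f^k=0$ in $R$, i.e.\ $\hess_f^k=0$. The only delicate point is the initial reduction to a minimal (hence linearly independent) decomposition; once that is granted, the block structure of the mixed Hessian together with the bound on $\dim A_{(0,k)}$ conclude the argument.
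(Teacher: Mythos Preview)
The paper does not prove this proposition; it is quoted from \cite[Prop.~2.5]{Go} without argument, so there is no in-paper proof to compare against. Your proof is correct and follows the natural route via the bi-graded block structure of $\Hess_f^k$: the vanishing of the entry $\alpha\beta(f)$ whenever the combined $X$-degree of $\alpha\beta$ exceeds $k$, the bound $\dim A_{(0,k)}\le\binom{m+k-1}{k}$, and the count $\dim A_{(k,0)}=s$ under linear independence together force the $A_{(k,0)}$-indexed columns to be dependent over $\operatorname{Frac}(R)$, hence $\hess_f^k=0$.

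Your caveat about the reduction to a minimal decomposition is well taken and in fact necessary rather than merely ``delicate'': as literally printed, the hypothesis can be met by \emph{any} bihomogeneous $f$ simply by padding the decomposition, which would falsely force $\hess_f^k=0$ always. So an implicit linear-independence assumption on the $f_i$ and $g_i$ (equivalently, that $s$ is the tensor rank of $f$ in $\C[x]_k\otimes\C[u]_e$) is required. This is consistent with how the result is actually invoked later in the paper --- in Corollary~\ref{cor2} the monomials $M_i$ and the forms $u^{b-i}v^i$ are visibly independent --- and is presumably how the statement appears in \cite{Go}. Once that is granted, your argument is complete.
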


\begin{ex}\rm 
Consider $M_i\in \C[x,y,z]_4$ with $i=0,\ldots,14$, be all the quartic monomials in $3$ variables and let 
\[f= \displaystyle \sum_{i=0}^{14} M_iu^{14-i}v^i \in \C[x,y,z,u,v]_{18}.\]
 We checked, using Macaulay2, that $f$ is $4$-concise.  By Prop \ref{prop:GNP}, $\hess^4_f=0$. By Theorem \ref{thm:main_C}, $cr(f)> \binom{4+4}{4}=140$. 
 By Proposition \ref{prop:border}, $\underline{rk}(f)\leq 4.(18+2)=80$. We get that $f$ is wild. 
\end{ex}

\begin{cor}\label{cor2}
 Let $M_i \in \C[x_0,\ldots,x_n]_k$ with $i=0,\ldots,b-1$ be all the monomials of degree $k$, where $b=\binom{n+k}{k}$. Let 
 \[f= \displaystyle \sum_{i=0}^{b-1} M_iu^{b-i}v^i \in \C[x,y,z,u,v]_{b-1+k}.\]
 If $\binom{n+k+2}{k}>k[(k+1)+\binom{n+k}{k}]$, then $f$ is wild. 
\end{cor}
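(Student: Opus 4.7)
The plan is to bound $\underline{rk}(f)$ from above and $cr(f)$ from below, then use the numerical hypothesis to separate them. This yields $\underline{rk}(f) < cr(f) \le sr(f)$, and hence wildness by Definition \ref{def:wild}. Set $N_i := u^{b-1-i}v^i$, so $f = \sum_{i=0}^{b-1} M_i N_i$ is bi-homogeneous of bi-degree $(k, b-1)$ in the variables $(x_0,\ldots,x_n;\, u,v)$ with total degree $d = b-1+k$.

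First I would verify that $f$ is $k$-concise, so that $a_k = \binom{n+k+2}{k}$ (matching the LHS of the hypothesis, with $n+2$ replacing $n$ to account for the two extra variables $u,v$). Since the $M_i$ form a monomial basis of $\C[x_0,\ldots,x_n]_k$ and the $N_i$ a basis of $\C[u,v]_{b-1}$, decomposing any $\alpha \in Q_j$ with $j\le k$ by bi-degree and observing that the bi-graded pieces of $\alpha(f)$ sit in distinct components of $R$ reduces the question to the injectivity of each catalecticant $\operatorname{Cat}_{p,q}\colon Q_{(p,q)} \to R_{(k-p,\,b-1-q)}$ for $p+q\le k$. The pure cases $p=0$ and $q=0$ are immediate from the basis property. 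I would also verify unimodality of $\Hilb(A)$, in the spirit of the Macaulay2 checks performed in the preceding example.

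Next comes the key vanishing: the numerical hypothesis forces $n\ge 2$, and therefore $b = \binom{n+k}{k} > k+1 = \binom{m+k-1}{k}$ with $m=2$, so Proposition \ref{prop:GNP} gives $\hess_f^k = 0$, i.e., $\Hess^{(k,k)}_f$ is degenerate. With $l = s = k$, we have $2k \le d$, so Theorem \ref{thm:main_C} together with $k$-conciseness and unimodality yields
\[ cr(f) \;>\; \binom{n+k+2}{k} \;=\; a_k. \]
On the other hand, the corollary of Proposition \ref{prop:border}, applied to the bi-degree $(k, d-k)$ of $f$, gives
\[ \underline{rk}(f) \;\le\; k(d+2) \;=\; k\bigl[(k+1) + \binom{n+k}{k}\bigr], \]
using $d+2 = b+k+1$. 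The hypothesis is precisely $a_k > k(d+2)$, so the two bounds combine to give $\underline{rk}(f) < cr(f)$.

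The main obstacle is Step 1 in full generality. The pure-bidegree cases of catalecticant injectivity are straightforward, but the mixed cases $p, q > 0$ demand a combinatorial argument tracking the interplay between $X$-derivatives of the monomials $M_i$ and $\partial_u, \partial_v$ applied to $N_i$; likewise unimodality has only been checked by direct computation in the paper's examples. Once these two structural properties are established, the wildness follows immediately by comparing the two bounds.
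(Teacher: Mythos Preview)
Your overall strategy is exactly the paper's: bound $\underline{rk}(f)$ above via Proposition~\ref{prop:border}, bound $cr(f)$ below via Theorem~\ref{thm:main_C} using $k$-conciseness and $\hess_f^k=0$ from Proposition~\ref{prop:GNP}, then compare.

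On what you call the ``main obstacle'' (the mixed-bidegree catalecticants): the paper dispatches $k$-conciseness directly rather than leaving it open. It decomposes $A_k=\bigoplus_{i=0}^k A_{(i,k-i)}$ and argues that, because \emph{all} degree-$k$ monomials in the $x$-variables and \emph{all} degree-$(b-1)$ monomials in $u,v$ appear in $f$, each bigraded piece is full-dimensional, $\dim A_{(i,k-i)}=(k-i+1)\binom{n+i}{i}$. Summing via the Vandermonde-type identity $\sum_{i=0}^k (k-i+1)\binom{n+i}{i}=\binom{n+k+2}{k}$ then gives $a_k=\binom{n+k+2}{k}$. So the combinatorial argument you were anticipating is replaced by a dimension count; the paper's justification of the ``full-dimensional'' step is admittedly brief.

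Your flag on unimodality is fair: the paper's proof invokes Theorem~\ref{thm:main_C} without checking it either, so you are being more careful than the paper here, not less.

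One genuine slip: your claim that the numerical hypothesis forces $n\ge 2$ is false. For $n=1$ one has $b=k+1$, and the inequality $\binom{k+3}{3}>2k(k+1)$ holds for all $k\ge 7$. But for $n=1$ the bidegree of $f$ is $(k,k)$, so the hypothesis $k<e$ of Proposition~\ref{prop:GNP} fails and the condition $b>k+1$ does too. Thus $n\ge 2$ is indeed required for the Hessian-vanishing step, but it must be imposed (or justified separately), not deduced from the numerical hypothesis. The paper does not address this point explicitly.
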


\begin{proof}
 We want to show that $f$ is $k$-concise, that is, $a_k=\binom{n+k+2}{k}$. Consider the decomposition of $A_k$ given by the bi-grading of $f$:
 \[A_k=A_{(k,0)}\oplus\ldots\oplus A_{(i,k-i)}\oplus \ldots \oplus A_{(0,k)}.\]
 By the choice of all the monomials in both variables, we get that \[\dim A_{(i,k-i)} = \dim A_{(0,k-i)}\dim A_{(i,0)}=(k-i+1)\binom{n+i}{i}.\] Therefore \[\dim A_k = \displaystyle \sum_{i=0}^k (k-i+1)\binom{n+i}{i} =\binom{n+k+2}{k}.\] 
 By Proposition \ref{prop:GNP},  $\hess_f^k=0$. By Proposition \ref{prop:border}, $\underline{rk}(f)\leq k[k+b-1+2] = k[(k+1)+\binom{n+k}{k}]$. The result follows from Theorem \ref{thm:main_C}. 
\end{proof}

{\bf Acknowledgments}.
We wish to thank F. Russo for his insightful suggestions and conversations on the subject.

\end{document}